\documentclass[11pt,a4paper,twoside]{article}
\usepackage{amsmath, amssymb, amsthm} %, amsthm

\topmargin=-20 true mm
\oddsidemargin=6 true mm
\evensidemargin=6 true mm
\setlength{\textheight}{248 true mm}  %total+the header
\setlength{\textwidth}{154 true mm}

% THEOREM Environments (Examples)-----------------------------------------
%
 \newtheorem{theorem}{Theorem}
 \newtheorem{corollary}{Corollary}
 \newtheorem{lemma}{Lemma}
 \newtheorem{proposition}{Proposition}
 \newtheorem{definition}{Definition}
 \newtheorem{remark}{Remark}
 \newtheorem{example}{Example}
% \numberwithin{equation}{section}

\def\calf{{\mathcal F}}
\def\vol{\operatorname{vol}}
\newcommand\id{\operatorname{id}}
\def\RR{\mathbb{R}}
\def\<{\langle}
\def\>{\rangle}
\newcommand\tr{\operatorname{tr}}
\newcommand\Div{\operatorname{div}}
\def\Ric{\operatorname{Ric}}

\newcommand{\eq}{\hspace*{-1.4mm}&=&\hspace*{-1.4mm}}

\author{Vladimir Rovenski\footnote{Department of Mathematics, University of Haifa, Mount Carmel, 3498838 Haifa, Israel
       \newline e-mail: {\tt vrovenski@univ.haifa.ac.il} } }

\title{Integral formulas for a foliated sub-Riemannian manifold}

\begin{document}

\date{}

\maketitle

\begin{abstract}

In this article, we deduce a series of integral formulas for a foliated sub-Riemannian manifold,
which is a new geometric concept denoting a Riemannian manifold equipped with a distribution ${\mathcal D}$ and a foliation ${\mathcal F}$,
whose tangent bundle is a subbundle of ${\mathcal D}$.
Our integral formulas generalize some results for foliated Riemannian manifolds
and involve the shape operators of ${\mathcal F}$ with respect to normals in ${\mathcal D}$ and
the curvature tensor of induced connection on ${\mathcal D}$.
The formulas also include arbitrary functions $f_j\ (0\le j<\dim{\mathcal F})$ depending on scalar invariants of the shape operators,
and for a special choice of $f_j$ reduce to integral formulas with the Newton transformations of the shape operators.
We apply our formulas to foliated sub-Riemannian manifolds with restrictions on the curvature and extrinsic geometry of~${\mathcal F}$
and to codimension-one foliations.

\vskip1.5mm\noindent
\textbf{Keywords}:
Distribution, foliation, shape operator, Newton transformation
%, mean curvatures

\vskip1.5mm\noindent
\textbf{Mathematics Subject Classifications (2010)} 53C12, 53C17
%; 57R25

\end{abstract}

%%% ----------------------------------------------------------------------
\maketitle
%%% ----------------------------------------------------------------------
%\tableofcontents

\section*{Introduction}

A non-holonomic manifold is a pair $(M,{\mathcal D})$, where $M$ is a smooth manifold and
${\mathcal D}$ is a distribution on $M$, i.e., a subbundle of the tangent bundle $TM$, see~\cite{BF}.
In terms of an almost product structure on $M$, i.e., a (1,1)-tensor field $P$ of constant rank for which $P^2=P$, see \cite{g1967},
we obtain
%a non-holonomic manifold $(M,{\mathcal D})$ with
${\mathcal D}=P(TM)$.
A non-holonomic manifold $(M,{\mathcal D})$ with a Riemannian metric $g$ on the distribution ${\mathcal D}$,
e.g., the restriction of a metric on the manifold $M$, is the main object of sub-Riemannian geometry.
In \cite{Bau}, they explore sub-Riemannian structures arising as the transversal distribution to a foliation.
%A distribution on a smooth manifold $M$ is a subbundle of the tangent bundle $TM$, and
%an integrable distribution ${\mathcal D}$ gives
%An almost product structure on $M$ is a (1,1)-tensor field $P$ of constant rank for which $P^2=P$, see \cite{g1967}.
%Denoting ${\mathcal D}=P(TM)$, we get the non-holonomic manifold $(M,{\mathcal D})$.
 In \cite{r-subR1}, we introduced a new geometrical concept of a \textit{foliated sub-Riemannian manifold},
i.e., $(M,{\mathcal D},g)$ equipped with
%a distribution ${\mathcal D}$ and
a foliation $\calf$, whose tangent bundle $T\calf$ is a subbundle of any codimen\-sion of~${\mathcal D}$.
(Recall that a foliation $\calf$
%(with $T\calf={\mathcal D}$) --
is a partition of $M$ into equivalence classes that locally models the partition of $\RR^{n+m}$ by submanifolds parallel to $\RR^n$).
%Using this concept,
%In \cite{r-subR1},
We~proved in \cite{r-subR1} a series of integral formulas for
%the case of
a codimension-one foliated sub-Riemannian manifold,
%i.e.,
%a sub-Riemannian manifold $(M,{\mathcal D},g)$,
i.e., ${\mathcal D}=T\calf\oplus\,{\rm span}(N)$, where $N$ is a unit vector field orthogonal to $\calf$;
these integral formulas generalize results in \cite{aw2010,blr} for
%codimension-one
foliated Riemannian manifolds.
On the other hand, in \cite{r8}, we extended the above mentioned results in \cite{aw2010,blr}
for foliations of arbitrary codimension of Riemannian mani\-folds,
as a series of integral formulas depending on the Newton or even more general transformations of the shape operators of the leaves.
% $\calf$ is a foliation of $M$,  whose tangent bundle is the subbundle (of any codimension) of ${\mathcal D}$, .

Analyzing history of extrinsic geometry of foliations, we see that from the origin it was related to some integral formulas containing the shape operator
(or the second fundamental form) of leaves and its invariants (mean curvature, higher order mean curvatures $\sigma_r$, etc.) and some expressions corresponding to geometry (curvature) of $M$.
%, see \cite{arw}.
Integral formulas are useful for solving many problems in differential geometry, both manifolds (for example, the Gauss-Bonnet formula for closed surfaces) and foliations, see surveys \cite{arw,RWa-1,Rov-Wa-2021}.
The~first known integral formula for a codimension-one foliation of a closed Riemannian manifold by G.\,Reeb \cite{reeb1}
%\begin{equation}\label{E-sigma1}
% \int_M H\,d\vol_g=0;
%\end{equation}
tells that the integral of the mean curvature $H$ of the leaves is equal to zero.
%thus either the mean curvature of the leaves $H\equiv0$ or $H(x)\,H(x')<0$ for some points $x\ne x'$ on $M$.
Its proof is based
on the application of the Divergence theorem to the identity $\Div N=-H$ with $N$ a unit normal to the leaves.
The~second formula in the series of total higher mean curvatures $\sigma_r$'s of a codimension-one foliation is, e.g.,~\cite{r8},
\begin{equation}\label{E-sigma2}
 \int_M (2\,\sigma_2-\Ric_{N,N})\,{\rm d}\vol_g=0.
\end{equation}
In \cite{aw2010}, the Newton transformations $T_r(A)$ (of the shape operator $A$ of the leaves) were applied to codimension one foliations,
and a series of integral formulas for $r\ge0$ starting with \eqref{E-sigma2} was obtained (with consequences for foliated space forms,
see~\cite{blr}):
\begin{equation}\label{E-intNTp1}
 \int_{M}\big((r+2)\,\sigma_{r+2}-\tr_{\,\calf}(T_r(A) {\mathcal R}_{N})
 -\<{\rm div}_\calf\, T_{r}(A),\,\nabla_{N} N\>
 %-\sum\nolimits_{j=1}^{\,r}\tr_{\,\calf}(T_{r-j}(A)\,{\mathcal R}_{(-A)^{j-1}\nabla_{N} N})
 \big)\,{\rm d}\vol_g=0.
\end{equation}
Here, the Jacobi operator ${\mathcal R}_{N}:T\calf\to T\calf$ is given by ${\mathcal R}_{\,N}(Y)=R(Y, N)N$.
%Here, the linear operator ${\mathcal R}_{X}:T\calf\to T\calf$ for $X\in TM$ is given by  ${\mathcal R}_{\,X}:Y\to R(Y, X)N$.
%Another series of integral formulas for codimension one foliations on a Riemannian manifold of finite volume (which are especially nice
%for a symmetric space) were obtained in \cite{rw1}, they also start from (\ref{eq001}) and (\ref{eq62}), and generalize (\ref{eq5f}).
%These formulas were obtained by applying the Divergence Theorem to suitable vector fields.
There is also a series of integral formulas for two complementary orthogonal distributions ${\cal D}$ and ${\cal D}^\bot$ (or foliations) of arbitrary dimension on a closed Riemannian manifold $(M,g)$,
%which include the shape operators of distributions,
% (or distributions) ,
see \cite{r8}, starting with the
%famous of these formulas is
the following generalization of \eqref{E-sigma2}, see \cite{Wa01}:
\begin{equation}\label{E-PW-int}
 \int_M\big({\rm S\,}_{\rm mix} +\|h\|^2+\|h^\bot\|^2-\|H\|^2-\|H^\bot\|^2-\|T\|^2-\|T^\bot\|^2\big)\,{\rm d}\vol_g=0.
\end{equation}
%which has many applications.
Here, $h,T: {\mathcal D}\times {\mathcal D}\to {\mathcal D}^\bot$ and $H=\tr_g h$
%$h,T:T\calf\times T\calf\to (T\calf)^\bot$ and $H=\tr_g h$
are the second fundamental form,
the integrability tensor and the mean curvature vector field of
%the distribution
${\mathcal D}$, and similarly for ${\mathcal D}^\bot$,
%; in~our case of a foliation, we have $T=0$.
${\rm S\,}_{\rm mix}$ is the mixed scalar curvature.
 The~following question naturally arises:
\textit{if the integral formulas of the type \eqref{E-intNTp1} and \eqref{E-PW-int}
%, see also} \cite{r8,r-subR1} \textit{
can be proved for foliated sub-Riemannian manifolds}?

In this article, we
%continue our study \cite{r-subR1} and
answer this question in the affirmative and derive a series of integral formulas
(in Theorems~\ref{T-mainLW} and \ref{T-mainLW-2} and Corollaries~\ref{C-main00}--\ref{T-mainLW-1})
for a foliated sub-Riemanni\-an mani\-fold,
% $(M,{\mathcal D},\calf,g)$,
which generalize some known integral formulas for foliated Riemannian manifolds, see \eqref{E-intNTp1} and \cite{aw2010,bn},
and extend the results in \cite{r-subR1}, where codim\,$\calf=1$.
Our integral formulas involve the shape operators of $\calf$ with respect to unit normals in ${\mathcal D}$,
some components of the curvature tensor of the induced connection on ${\mathcal D}$.
The formulas also include arbitrary functions $f_j\ (0\le j<\dim\calf)$ depending on scalar invariants of the shape operators,
and for a special choice $f_j=(-1)^j\sigma_{r-j}$ reduce to integral formulas with Newton transformations of the shape operators
(and their scalar invariants -- the $r$th mean curvatures) of $\calf$.
%To~prove the integral formulas for foliations of arbitrary codimension we use the normal sphere bundle of a foliation.
%This is a submanifold of the tangent sphere bundle with the Sasaki metric studied by several geometers.
We apply our formulas to foliated sub-Riemannian manifolds with restrictions on the curvature and extrinsic geometry of the leaves
of $\calf$ and to codimension-one foliations.

%The Reeb type integral formula
\section{Preliminaries}
%\label{sec:02}

Here, we define the shape operator $A_\xi$ and its Newton transformations $T_r(A_\xi)$,
the $r$th mean curvatures $\sigma_r(\xi)$ and power sums symmetric functions $\tau_k(\xi)$.

Let ${\mathcal D}$ be an $(n+p)$-dimensional distribution on a smooth $m$-dimensional manifold $M$, i.e.,
a subbundle of $TM$ of rank $n+p$ (where $n,p>0$ and $n+p<m$).
In other words, to each point $x\in M$ we assign an $(n+p)$-dimensional subspace ${\mathcal D}_x$ of the tangent space $T_xM$ smoothly depending on $x$.
%%%%%%%%%%
An integrable distribution determines a foliation; in this case, the Lie bracket of any two vector fields from the distribution ${\mathcal D}$ also belongs to
 ${\mathcal D}$.

A pair $(M,{\mathcal D})$, where ${\mathcal D}$ is a non-integrable distribution on a manifold $M$, is called a \textit{non-holonomic manifold}, see~\cite{BF}.
The concept of a non-holonomic manifold was introduced for the geometric interpretation of constrained systems in classical mechanics.
%%%%%%%%%%
A~\textit{sub-Riemannian manifold} is a non-holonomic manifold $(M,{\mathcal D})$, where ${\mathcal D}$ is equipped with a sub-Riemannian metric $g=\<\cdot\,,\cdot\>$, i.e.,
the scalar product on ${\mathcal D}_x$ for all $x\in M$, see \cite{BF}.
Usually, they assume that the sub-Riemannian metric on ${\mathcal D}$ (the horizontal bundle)
is extended to a Riemannian metric on the whole $M$, also denoted by $g$.
This allows us to define the orthogonal distribution $\widetilde{\mathcal D}$ (the vertical subbundle) such that $TM={\mathcal D}\oplus\widetilde{\mathcal D}$.

\begin{definition}\rm
A sub-Riemannian manifold $(M,{\mathcal D},g)$ equipped with a foliation $\calf$ such that the tangent bundle $T\calf$ is a subbundle of ${\mathcal D}$ will be called a \textit{foliated sub-Riemannian manifold}.
\end{definition}

%In this article,
Let $\calf$ be a foliation of codimension $p$ relative to ${\mathcal D}$, i.e., $\dim\calf=n$,
and let ${\mathcal N}\calf$ be the orthogonal complement of $T\calf$ in~${\mathcal D}$; thus, the following decomposition holds:
%\[
 ${\mathcal D}=T\calf\oplus {\mathcal N}\calf$.
%\]
 Let $^\top$ denotes the projection from $TM$ on the vector subbundle $T\calf$.
The \textit{shape operator} $A_\xi: T\calf\to T\calf$ of the foliation $\calf$ with respect to the unit normal $\xi\in {\mathcal N}\calf$ is defined by
\begin{equation*}
%\label{E-A-D}
 A_\xi(X)=-(\nabla_X\,\xi)^\top,\quad X\in T\calf.
% A_N(X)=-P(\nabla_X\,N),\quad X\in T\calf.
\end{equation*}
%Note that
%\[
% \<A_N(X),Y\>= \<h(X,Y),N\>.
%\]
The \textit{elementary symmetric functions} $\sigma_j(A_\xi)$ of $A_\xi$
%($r$th mean curvatures of $\calf$ in ${\mathcal D}$)
are given~by the equality, e.g., \cite{RWa-1},
\[
 \sum\nolimits_{\,r=0}^{\,n}\sigma_r(A_\xi)\,t^r=\det(\,\id_{\,T\calf}+\,t\,A_\xi),\quad t\in\RR.
\]
Note that
%We view the elementary symmetric functions as functions on $\RR^n$
%\[
% \sigma_{k}(\lambda_1(\xi),\ldots \lambda_n(\xi))
 $\sigma_r(A_\xi) = \sum\nolimits_{\,i_1<\cdots<i_r} \lambda_{i_1}(\xi)\cdots \lambda_{i_r}(\xi)$,
%\]
where $\lambda_1(\xi)\le\ldots\le\lambda_n(\xi)$ are the eigenvalues of $A_\xi$.
The \textit{power sums} symmetric functions of $A_\xi$ are
\[
 \tau_j(A_\xi)=\tr\,(A_\xi^j)=\sum\nolimits_{\,1\le i\le n}(\lambda_i(\xi))^j,\quad j\in\mathbb{N}.
\]
%For short,
For short, set
\[
% A=A_N,\quad
 \sigma_r(\xi)=\sigma_{r}(A_\xi),\quad
 \tau_{r}(\xi)=\tau_{r}(A_\xi).
\]
For~example,
$\sigma_0(\xi)=1$, $\sigma_n(\xi)=\det A_\xi$,
$\sigma_1(\xi)=\tau_1(\xi)=\tr A_\xi$ and $2\,\sigma_2(\xi)=\tau_1^2(\xi)-\tau_2(\xi)$.

%\smallskip
%Many authors investigated $r$th mean curvatures of foliations and submanifolds of Riemannian manifolds using the Newton transformations of the shape operator, %see \cite{aw2010}.

\begin{definition}\rm
The \textit{Newton transformations} $T_{r}(A_\xi)$ of the shape operator $A_\xi$ of
%an $n$-dimensional foliation $\calf$ of
a foliated sub-Riemannian manifold $(M,{\mathcal D},\calf,g)$ are defined recursively or explicitly by
\begin{equation}\label{E-defNTind}
  T_0(A_\xi) = \id_{\,T\calf},\quad T_r(A_\xi)=\sigma_r(\xi)\id_{\,T\calf} -A_\xi\,T_{r-1}(A_\xi)\quad (0< r\le n),
\end{equation}
\begin{equation}\label{E-intNT-C}
 T_r(A_\xi) = \sum\nolimits_{j=0}^r(-1)^j\sigma_{r-j}(\xi)\,A_\xi^j =\sigma_r(\xi)\id_{\,T\calf} -\sigma_{r-1}(\xi)\,A_\xi+\ldots+(-1)^r A_\xi^{\,r}.
\end{equation}
\end{definition}

For example, $T_1(A_\xi)=\sigma_1(\xi)\id_{\,T\calf} - A_\xi$ and $T_n(A_\xi)=0$.
%Notice that $A_\xi$ and $T_{r}(A_\xi)$ commute.
Define a (1,1)-tensor field
%${\mathcal A}(x)=\int_{\,({\mathcal N}_1\calf)_x}{\mathcal A}_\xi\,{\rm d}\,\omega_x^\perp$ for $x\in M$,
\begin{equation}\label{E-barCN-1}
 {\mathcal A}_\xi:=\sum\nolimits_{\,j=0}^{\,n-1} f_j(\tau_1(\xi),\ldots,\tau_n(\xi))\,A_\xi^j,
\end{equation}
where $f_j:\RR^n\to\RR\ (0\le j< n)$ are given functions.
We write $f_j=f_j(\xi)$ shortly.
%\begin{equation}\label{E-barCN-1}
% {\mathcal A}_\xi:=\sum\nolimits_{\,j=0}^{\,n-1} f_j(\tau_1(\xi),\ldots,\tau_n(\xi))\,A_\xi^j.
%\end{equation}
 The~choice of the RHS for ${\mathcal A}_\xi$
 in (\ref{E-barCN-1}) is natural for the following reasons, \cite[Section~1.2.2]{RWa-1}:

\smallskip\noindent\ \
$\bullet$ the powers $A_\xi^j$ are the only (1,1)-tensors, obtained algebraically from $A_\xi$,
while $\tau_1(\xi),$ $\ldots,\tau_n(\xi)$, or, equivalently,
$\sigma_1(\xi), \ldots, \sigma_n(\xi)$, generate all scalar invariants of $A_\xi$.

\smallskip\noindent \ \ \
$\bullet$ the Newton transformation $T_r(A_\xi)\ (0\le r<n)$ of (\ref{E-intNT-C}) depends on all $A_\xi^j\ (0\le j\le r)$.

\smallskip\noindent
In this article, for illustrate the results for ${\mathcal A}_\xi$ by
%clarity and simplicity, we restrict ourselves to
the particular case of ${\mathcal A}_\xi = T_r(A_\xi)$.
%The case of general tensor ${\mathcal A}_\xi$, see \eqref{E-barCN-1}, can be studied similarly.
% to \cite[Chapter~1]{RWa-1}.
%%%%%%%%%%%%%%%%%%%%%%%%%%%%%%%%%%%%%%

Let $\nabla^\calf:TM\times T\calf\to T\calf$ be the induced connection on the vector subbundle $T\calf$.
Since the (1,1)-tensors $A_\xi$ and $T_r(A_\xi)$ are self-adjoint, we have
\begin{equation*}
%\label{E-Tr-adjoint}
 \<(\nabla^\calf_{X}\,T_r(A_\xi))Y, \,V\> = \<(\nabla^\calf_{X}\,T_r(A_\xi))V, \,Y\>,\quad X,Y,V\in T\calf.
\end{equation*}
The following properties of $T_r(A)$ are proved similarly as for codimension-one foliations of a Riemannian manifold,
e.g., \cite{aw2010} or
%Lemma~1.3 in
\cite[Lemma~1.3]{RWa-1}.

\begin{lemma}
%[see \cite{aw2010} and Section~1.2.2 in \cite{RWa-1}]
\label{L-NTprop}
For the shape operator $A_\xi$ of an $n$-dimensional foliation $\calf$ we have
 \begin{eqnarray*}
 \tr_{\,\calf} T_r(A_\xi)\eq(n-r)\,\sigma_r(\xi),\\
 \tr_{\,\calf} (A_\xi\cdot T_r(A_\xi)) \eq (r+1)\,\sigma_{r+1}(\xi),\\
 \tr_{\,\calf} (A^2\cdot T_r(A_\xi)) \eq \sigma_{1}(\xi)\,\sigma_{r+1}(\xi)-(r+2)\,\sigma_{r+2}(\xi),\\
 \tr_{\,\calf} ( T_{r-1}(A_\xi)(\nabla_X^\calf A_\xi)) \eq X(\sigma_{r}(\xi)),\quad X\in T\calf,\\
 k\/\tr_{\,\calf}(A^{k-1}_\xi\nabla^{\calf}_{X} A_\xi) \eq X(\tau_k(\xi)),\quad X\in T\calf,\ \ k>0.
 \end{eqnarray*}
\end{lemma}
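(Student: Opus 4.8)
The plan is to prove the five identities of Lemma~\ref{L-NTprop} by reducing everything to the elementary recursive definition \eqref{E-defNTind} of the Newton transformations and to the trace identities for elementary symmetric functions. Throughout I work pointwise at a fixed $x\in M$ with $\xi$ a fixed unit normal, so $A_\xi$ is a self-adjoint endomorphism of the $n$-dimensional space $T_x\calf$; this lets me treat the first three identities as purely linear-algebraic statements about a symmetric operator, and only the last two require the induced connection $\nabla^\calf$.

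For the first three identities I would argue as follows. For $\tr_{\,\calf} T_r(A_\xi)=(n-r)\,\sigma_r(\xi)$, the cleanest route is induction on $r$ using \eqref{E-defNTind}: taking traces in $T_r(A_\xi)=\sigma_r(\xi)\id_{\,T\calf}-A_\xi\,T_{r-1}(A_\xi)$ gives $\tr_{\,\calf}T_r(A_\xi)=n\,\sigma_r(\xi)-\tr_{\,\calf}(A_\xi\,T_{r-1}(A_\xi))$, so this identity is coupled to the second one. I would therefore prove the first two simultaneously: assuming $\tr_{\,\calf}(A_\xi\,T_{r-1}(A_\xi))=r\,\sigma_r(\xi)$ (the $r$th instance of the second identity) yields $\tr_{\,\calf}T_r(A_\xi)=(n-r)\,\sigma_r(\xi)$, and then multiplying \eqref{E-defNTind} by $A_\xi$, taking traces, and invoking the recursion $\sigma_{r+1}(\xi)=\tfrac{1}{r+1}\big(\sigma_1(\xi)\sigma_r(\xi)-\dots\big)$—equivalently the standard Newton--Girard relation for symmetric functions of the eigenvalues $\lambda_i(\xi)$—closes the induction. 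The third identity, $\tr_{\,\calf}(A_\xi^2\,T_r(A_\xi))=\sigma_1(\xi)\sigma_{r+1}(\xi)-(r+2)\sigma_{r+2}(\xi)$, follows by writing $A_\xi^2\,T_r(A_\xi)=A_\xi\big(\sigma_{r+1}(\xi)\id_{\,T\calf}-T_{r+1}(A_\xi)\big)$ via \eqref{E-defNTind} and applying the already-proven second identity to each term.

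The last two identities are differentiations of scalar invariants, and here the key input is that $T_{r-1}(A_\xi)$ and $A_\xi^{k-1}$ are self-adjoint and commute with $A_\xi$, together with the fact that $\nabla^\calf$ is compatible with the metric so that $\tr_{\,\calf}$ commutes with $X(\cdot)$. For $k\,\tr_{\,\calf}(A_\xi^{k-1}\nabla^\calf_X A_\xi)=X(\tau_k(\xi))$ I would simply differentiate $\tau_k(\xi)=\tr_{\,\calf}(A_\xi^k)$ along $X$, using the Leibniz rule $X\,\tr_{\,\calf}(A_\xi^k)=\sum_{i=0}^{k-1}\tr_{\,\calf}(A_\xi^i(\nabla^\calf_X A_\xi)A_\xi^{k-1-i})$ and the cyclic property of the trace to collapse all $k$ summands into $k\,\tr_{\,\calf}(A_\xi^{k-1}\nabla^\calf_X A_\xi)$. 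The fourth identity $\tr_{\,\calf}(T_{r-1}(A_\xi)(\nabla^\calf_X A_\xi))=X(\sigma_r(\xi))$ then follows by expanding $T_{r-1}(A_\xi)$ through \eqref{E-intNT-C} as a polynomial in $A_\xi$, applying the power-sum result to get a combination of the $X(\tau_k(\xi))$, and recognizing the outcome as $X(\sigma_r(\xi))$ via the derivative form of the Newton--Girard identities relating $\sigma_r$ to the $\tau_k$.

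The main obstacle I anticipate is purely bookkeeping rather than conceptual: the self-adjointness and commutativity remarks make each trace manipulation legitimate, but carefully justifying that $\tr_{\,\calf}$ and the metric-compatible derivative $\nabla^\calf$ interchange—so that $X(\tr_{\,\calf}B)=\tr_{\,\calf}(\nabla^\calf_X B)$ for a $T\calf$-endomorphism $B$—is the one step that genuinely uses the sub-Riemannian setting and deserves explicit comment. Once that is in place, the entire lemma is a transcription of the classical codimension-one arguments (as in \cite{aw2010} or \cite{RWa-1}) with the single normal replaced by the chosen unit normal $\xi\in\mathcal{N}\calf$, and the verifications reduce to the Newton--Girard relations among $\sigma_r(\xi)$ and $\tau_k(\xi)$.
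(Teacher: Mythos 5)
Your proposal is correct, and it is essentially the argument the paper itself relies on: the paper gives no proof of Lemma~\ref{L-NTprop}, stating only that the identities ``are proved similarly as for codimension-one foliations of a Riemannian manifold'' with references to \cite{aw2010} and \cite[Lemma~1.3]{RWa-1}, and your reconstruction --- pointwise linear algebra for the first three identities via the recursion \eqref{E-defNTind} and the Newton--Girard relations, then the Leibniz/cyclic-trace computation for $X(\tau_k(\xi))$ and the derivative form of Newton--Girard (equivalently $d\sigma_r(A)[B]=\tr(T_{r-1}(A)B)$) for the last two --- is exactly that classical proof transcribed with the fixed unit normal $\xi\in{\mathcal N}_1\calf$ and the induced metric connection $\nabla^\calf$. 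Your explicit flagging of the interchange $X(\tr_{\,\calf}B)=\tr_{\,\calf}(\nabla^\calf_X B)$ is the right point to isolate (it follows from metric compatibility of $\nabla^\calf$ alone), and the only cosmetic caveat is that your coupled induction for the first two identities is cleanest if one proves $\tr_{\,\calf}(A_\xi T_r(A_\xi))=(r+1)\,\sigma_{r+1}(\xi)$ directly from the expansion \eqref{E-intNT-C} and Newton--Girard, as you indeed indicate, rather than by multiplying the recursion by $A_\xi$, which would reintroduce the third identity.
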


\section{The induced connection and curvature on ${\mathcal D}$}

Here, we define the induced linear connection $\nabla^P$ and the curvature tensor $R^P$ related to a foliated sub-Riemannian manifold,
we also prove Codazzi type equation.
The orthoprojector $P:TM\to{\mathcal D}$  on the distribution ${\mathcal D}$
%(similarly, the orthoprojector $P^\bot=\id_{\,TM}-{P}$ on $\widetilde{\mathcal D}$)
is characterized by the properties, e.g.,~\cite{g1967},
\begin{equation*}
%\label{E-PP}
 P=P^*\ (\textrm{self-adjoint}),\quad P^2=P.
\end{equation*}
The Levi-Civita connection $\nabla$ on $(M,g)$ induces a linear connection $\nabla^P$ on ${\mathcal D}$:
% see \cite{g1967}:
\[
 \nabla^P_X (PY) = P\nabla_X (PY),\quad X,Y\in\Gamma(TM),
\]
which is compatible with the metric on ${\cal D}$: $X\<U,V\> = \<\nabla^P_X U,\, V\> +\<U,\, \nabla^P_X V\>$ for
$U,V\in{\cal D}$.
Set ${R}^P(X,Y,U,V)=\<{R}^P(X,Y)U,\, V\>$ for $U,V\in{\mathcal D}$, where $R^P:TM\times TM\rightarrow{\rm End}({\mathcal D})$ 
%\begin{equation}\label{E-R-k}
%%\label{E-R-ki}
% {R}^P(X,Y) = \nabla^P_{X}\nabla^P_{Y} -\nabla^P_{Y}\nabla^P_{X} -\nabla^P_{[X,\, Y]},
%\end{equation}
is the curvature tensor of $\nabla^P$.
As for any linear connection, we have the anti-symmetry for the first pair of vectors: ${R}^P(Y,X)U=-{R}^P(X,Y)U$.
Since $\nabla^P$ is compatible with $g$, then the anti-symmetry for the last pair of vectors in ${\cal D}$ is valid, e.g., \cite{jj2011},
\begin{equation}\label{E-R-symm2}
 {R}^P(X,Y,U,V)=-{R}^P(X,Y,V,U).
\end{equation}
Let $^\bot$ denotes the projection on the vector subbundle $(T\calf)^\bot$ orthogonal to $\calf$ in $TM$.
The~Codazzi equation for a foliation (or a submanifold) of $(M,g)$ has the form, e.g.,~\cite{rov-m}:
\begin{equation}\label{E-codazzi-classic}
 (\nabla_{X}\, h)(Y,U) -(\nabla_{Y}\, h)(X,U) = (R(X,Y)U)^\bot,
\end{equation}
where
$R:TM\times TM\rightarrow{\rm End}(TM)$ is the Riemannian curvature tensor of $\nabla$,
\[
 {R}(X,Y) = \nabla_{X}\nabla_{Y} -\nabla_{Y}\nabla_{X} -\nabla_{[X,\, Y]},
\]
and $h:T\calf\times T\calf\to(T\calf)^\bot$
%(where $T\calf\oplus(T\calf)^\bot=TM$)
is the second fundamental form of $\calf$ in $(M,g)$ defined by
\[
 h(X,Y)=(\nabla_XY)^\bot.
\]
Hence, $\<A_\xi(X),\,Y\>=\<h(X,Y),\,\xi\>$ for $\xi\in {\mathcal N}\calf$.

\begin{lemma}
%  (\ref{E-codazziCN}),
%For ${\mathcal D}=T\calf\oplus\,{\rm span}(N)$ on $(M,g)$,
The following Codazzi type equation is valid:
\begin{equation}\label{E-codazziAxi}
 (\nabla^\calf_{X}\, A_\xi) Y -(\nabla^\calf_{Y}\, A_\xi) X = -R^P(X,Y)\xi ,\quad X,Y\in T\calf,\ \ \xi\in {\mathcal N}\calf.
\end{equation}
\end{lemma}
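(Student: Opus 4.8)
The plan is to prove \eqref{E-codazziAxi} by a direct computation of $R^P(X,Y)\xi$ from its definition as the curvature of $\nabla^P$, using only the formula $\nabla^P_Z U=P\nabla_Z U$ for $U\in{\mathcal D}$ together with the splitting ${\mathcal D}=T\calf\oplus{\mathcal N}\calf$; this keeps everything inside ${\mathcal D}$ and avoids the $\widetilde{\mathcal D}$-components that the classical Codazzi equation \eqref{E-codazzi-classic} would drag in. First I would observe that, for $X\in T\calf$ and $\xi\in{\mathcal N}\calf$, the vector $\nabla^P_X\xi=P\nabla_X\xi$ splits as $\nabla^P_X\xi=-A_\xi X+\nabla^{\mathcal N}_X\xi$, where the $T\calf$-component equals $(\nabla_X\xi)^\top=-A_\xi X$ by the very definition of the shape operator, and $\nabla^{\mathcal N}_X\xi$ denotes the ${\mathcal N}\calf$-component, i.e.\ the connection induced by $\nabla^P$ on the subbundle ${\mathcal N}\calf$. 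Dually, for $Y\in T\calf\subset{\mathcal D}$ the $T\calf$-component of $\nabla^P_X(A_\xi Y)=P\nabla_X(A_\xi Y)$ is $\nabla^\calf_X(A_\xi Y)$.

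Next I would substitute these into $R^P(X,Y)\xi=\nabla^P_X\nabla^P_Y\xi-\nabla^P_Y\nabla^P_X\xi-\nabla^P_{[X,Y]}\xi$ and expand the second-order terms by applying the same splitting once more: differentiating $-A_\xi Y\in T\calf$ produces $-\nabla^\calf_X(A_\xi Y)$ together with an ${\mathcal N}\calf$-valued term, while differentiating $\nabla^{\mathcal N}_Y\xi\in{\mathcal N}\calf$ produces the shape term $-A_{\nabla^{\mathcal N}_Y\xi}X$ together with the second normal derivative $\nabla^{\mathcal N}_X\nabla^{\mathcal N}_Y\xi$. Since $\calf$ is a foliation, $T\calf$ is integrable, so $[X,Y]\in T\calf$ and, by the torsion-freeness of $\nabla$, $[X,Y]=\nabla^\calf_X Y-\nabla^\calf_Y X$; this lets me rewrite $A_\xi[X,Y]$ through $A_\xi\nabla^\calf_X Y$ and $A_\xi\nabla^\calf_Y X$ and assemble the combinations $\nabla^\calf_X(A_\xi Y)-A_\xi\nabla^\calf_X Y$, which are exactly the covariant derivatives $(\nabla^\calf_X A_\xi)Y$ up to the normal-connection correction $A_{\nabla^{\mathcal N}_X\xi}Y$ built into that derivative.

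The decisive step is then to collect terms and project onto $T\calf$. On the tangential side the cross-terms $A_{\nabla^{\mathcal N}_X\xi}Y$ and $A_{\nabla^{\mathcal N}_Y\xi}X$ cancel in pairs under the antisymmetrization in $X,Y$, leaving precisely $-\big[(\nabla^\calf_X A_\xi)Y-(\nabla^\calf_Y A_\xi)X\big]$; on the normal side the second normal derivatives combine into the curvature of the connection induced on ${\mathcal N}\calf$, which accounts for the ${\mathcal N}\calf$-component of $R^P(X,Y)\xi$. Reading off the $T\calf$-part then yields \eqref{E-codazziAxi} with the correct sign. I expect the main obstacle to be organizational rather than conceptual: keeping the two successive $T\calf/{\mathcal N}\calf$ decompositions straight and verifying that the normal cross-terms cancel exactly, so that the tangential part reduces to the covariant derivative of the operator $A_\xi$ with no spurious $A_{\nabla^{\mathcal N}\xi}$ contributions. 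One should also fix the convention that $\nabla^\calf_X A_\xi$ is the induced derivative of the $T\calf$-valued tensor $A_\xi$, with the connection on ${\mathcal N}\calf$ acting on the subscript $\xi$, so that \eqref{E-codazziAxi} is read as an identity in $T\calf$ (the tangential projection of $R^P(X,Y)\xi$ being the piece that enters the formula).
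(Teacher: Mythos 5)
Your proof is correct, and it takes a genuinely different route from the paper's. The paper obtains \eqref{E-codazziAxi} from the classical Codazzi equation \eqref{E-codazzi-classic}: testing against $U\in T\calf$ gives \eqref{E-codazzi-1}, the ambient curvature $R$ is traded for $R^P$ via the projection identity \eqref{E-codazzi-RN}, and the terms in $\nabla((\nabla\xi)^\bot)$ cancel against the $h$-terms using $[X,Y]^\bot=0$. You instead expand $R^P(X,Y)\xi$ directly from $\nabla^P=P\nabla$ with the Gauss--Weingarten splitting of ${\mathcal D}=T\calf\oplus{\mathcal N}\calf$, never touching $R$, $\widetilde{\mathcal D}$, or the classical Codazzi equation. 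I checked the bookkeeping: the $T\calf$-part of $R^P(X,Y)\xi$ comes out as $-\nabla^\calf_X(A_\xi Y)+\nabla^\calf_Y(A_\xi X)+A_{\nabla^{\mathcal N}_X\xi}Y-A_{\nabla^{\mathcal N}_Y\xi}X+A_\xi[X,Y]$, which is exactly $-\big[(\nabla^\calf_X A_\xi)Y-(\nabla^\calf_Y A_\xi)X\big]$ once $\nabla^\calf A_\xi$ carries the ${\mathcal N}\calf$-connection in the subscript, as you stipulate (strictly speaking, the cross-terms do not cancel within the curvature expansion itself under antisymmetrization; they are absorbed into the two-argument covariant derivative of $A$ --- which is precisely what your convention sentence provides, so the argument is sound). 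Both of your flagged caveats are exactly right and match the paper's implicit usage: with $\xi$ treated as a frozen section (no subscript correction) the antisymmetrized left side differs by $A_{\nabla^{\mathcal N}_X\xi}Y-A_{\nabla^{\mathcal N}_Y\xi}X$ and the identity holds only after normalizing $\nabla\xi\perp{\mathcal N}\calf$ at the point of computation, which is the normalization the paper adopts in its later proofs (``one may assume $\nabla_{e_i}\,\xi|_{\,x}\in T_x\calf$''); and \eqref{E-codazziAxi} must indeed be read in $T\calf$, since the ${\mathcal N}\calf$-component of $R^P(X,Y)\xi$ is a Ricci-type quantity that need not vanish --- consistently, the paper only ever uses the lemma through the projected operator ${\mathcal R}^P_{X,\,\xi}V=(R^P(V,X)\,\xi)^\top$ or paired with vectors in $T\calf$. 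What each route buys: yours is self-contained and makes the convention and projection subtleties fully transparent; the paper's, at the cost of invoking the ambient Codazzi equation, produces the relation \eqref{E-codazzi-RN} between $R$ and $R^P$ as a by-product of independent use.
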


\begin{proof}
From \eqref{E-codazzi-classic}, for all vectors $X,Y,U\in T\calf$ we get
\begin{equation}\label{E-codazzi-1}
 \<(\nabla^\calf_{X}\, A_\xi) Y -(\nabla^\calf_{Y}\, A_\xi) X,\ U\> + \<h(X,U), \nabla_Y \xi\> - \<h(Y,U), \nabla_X \xi\>
 = -\<R(X,Y)\xi, U\> .
\end{equation}
 Applying the orthoprojector from $TM$ on the vector subbundle $(T\calf)^\bot$, we find
\begin{equation}\label{E-codazzi-RN}
 \<R(X,Y)\xi, U\>=\<R^P(X,Y)\xi, U\> +
 \<\nabla_{X}((\nabla_{Y}\,\xi)^\bot) -\nabla_{Y}((\nabla_{X}\,\xi)^\bot) -\nabla_{[X,\ Y]^\bot}\,\xi, U\>.
\end{equation}
Using the equalities $[X,\ Y]^\bot=0$ (since $T\calf$ is integrable), \eqref{E-codazzi-RN} and
\begin{eqnarray*}
 \<h(X,U),\, \nabla_Y\,\xi\> = \<\nabla_{Y}\xi,\, (\nabla_{X}\,U)^\bot\> = -\<\nabla_{X}((\nabla_{Y}\,\xi)^\bot),\, U\>,\\
 \<h(Y,U),\, \nabla_X\,\xi\> = \<\nabla_{X}\xi,\, (\nabla_{Y}\,U)^\bot\> = -\<\nabla_{Y}((\nabla_{X}\,\xi)^\bot),\, U\>,
\end{eqnarray*}
in \eqref{E-codazzi-1} completes the proof.
\end{proof}

\section{The $\calf$-divergence of (1,1)-tensors on ${\mathcal D}$}

 An orthonormal frame $\{e_i, e_a\}_{1\le i\le n,\,1\le a\le p}$ on ${\mathcal D}$ built either of single vectors or of (local) vector fields
is said to be \textit{adapted} (to $\calf$), whenever $e_i\in T\calf$ and $e_a\in {\mathcal N}\calf$.
Following \cite{aw2010,r8}, define the $\calf$-divergence of (1,1)-tensors $A^k_\xi$ and
%of the Newton transformation
$T_r(A_\xi)$ by
\begin{equation}\label{eq:div1}
 \Div_\calf A^k_\xi = \sum\nolimits_{\,i=1}^n(\nabla^\calf_{e_i}\, A^k_\xi)\,e_i,\quad
 \Div_\calf T_r(A_\xi) = \sum\nolimits_{\,i=1}^n(\nabla^\calf_{e_i}\,T_r(A_\xi))\,e_i.
\end{equation}
Note that
$\Div_\calf T_0(A_\xi) =\Div_\calf\,\id_{T\calf} = \nabla^\calf_{e_i}(\id_{T\calf}\,e_i) -\id_{T\calf}(\nabla^\calf_{e_i}\,e_i)=0$.

\begin{remark}\rm
Given a $(1,1)$-tensor $S$, they usually define the 1-form ${\rm div}_\calf\,S$ by
\begin{equation*}%\label{eq:div2}
 (\Div_\calf S)(X) = \sum\nolimits_{\,i=1}^n\<(\nabla_{i}\,S)(X), \,e_i\> ,
\end{equation*}
and the vector field $\nabla^{*\calf}S$ (called the \textit{adjoint of the covariant $\calf$-derivative}) by
\begin{equation*}%\label{eq:div1}
 (\nabla^{*\calf} S)(X) =-\sum\nolimits_{\,i=1}^n(\nabla_i\,S)( e_i, X).
\end{equation*}
Using a metric $g$ on a manifold, we identify $(1,0)$-tensors with $(0,1)$-tensors.
For simplicity, in the article we use the definition \eqref{eq:div1} and do not mention the $\nabla^{*\calf} $-notation.
\end{remark}

For any $X\in{\mathcal D}$ and $\xi\in {\mathcal N}_1\calf$, define a linear operator ${\mathcal R}^P_{X, \,\xi}:T\calf\to T\calf$ by
\begin{equation*}
%\label{E-operRYN}
 {\mathcal R}^P_{\,X, \,\xi}: V \to (R^P(V, X)\,\xi)^\top,\quad V\in T\calf.
\end{equation*}
The following result generalizes \cite[Lemma~2.2]{aw2010}, see also \cite[Lemmas~2.4 and 2.5]{r8}.

\begin{proposition}
%\label{L-divCk-1}
$($a$)$ The following formula is valid for any $\xi\in {\mathcal N}_1\calf$ and $X\in T\calf$:
\begin{eqnarray}\label{E-divCN-Z}
\nonumber
  \< \Div_\calf{\mathcal A}_\xi, \,X\> \eq \sum\nolimits_{\,k<n}\big[ (A^{k}_\xi X)(f_k)
 +f_k\sum\nolimits_{\,j=1}^k\big(\frac{1}{k-j+1}\,(A^{j-1}_\xi X)(\tau_{k-j+1}(\xi))\\
 && +\tr_{\,\calf}(A^{k-j}_\xi\,{\mathcal R}^P_{\,(-A_\xi)^{j-1} X, \,\xi})
 %+\sum\nolimits_{\,i=1}^n \<R^P( (-A_\xi)^{j-1} X, \,e_i)\xi, A^{k-j}_\xi e_i\>
 \big)\big].
\end{eqnarray}
$($b$)$ For~$f_j=(-1)^j\sigma_{r-j}$ and $0<r<n$, \eqref{E-divCN-Z} gives the following:
%consequence for Newton transformations of $A_\xi$.
% The following formula is valid for $\xi\in{\mathcal N}\calf$, $X\in T\calf$
%One may show that $\Div_\calf T_r(A_\xi)$ for $r>0$ is given by \eqref{E-divNTk-1}.
\begin{equation}\label{E-divNTk-1}
 \<\Div_\calf T_r(A_\xi), \,X\> = \sum\nolimits_{\,j=1}^r
 \tr_{\,\calf}\big(T_{r-j}(A_\xi)\,{\mathcal R}^P_{\,(-A_\xi)^{j-1} X, \,\xi}\big).
% \sum\nolimits_{\,i=1}^n\<R^P(e_i, (-A_\xi)^{j-1}X)\xi, \,T_{r-j}(A_\xi) e_i\>.
\end{equation}
\end{proposition}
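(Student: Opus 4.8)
The plan is to prove part (a) by a direct expansion and then obtain part (b) as a specialization. Since ${\mathcal A}_\xi=\sum_{k<n}f_k A_\xi^k$ and $\Div_\calf$ is $\RR$-linear, I first apply the Leibniz rule $\nabla^\calf_{e_i}(f_k A_\xi^k)=(e_i f_k)\,A_\xi^k+f_k\,\nabla^\calf_{e_i}A_\xi^k$, contract with $e_i$ and pair with $X$. Using that $A_\xi^k$ is self-adjoint, the first piece becomes $\sum_i(e_i f_k)\langle e_i,A_\xi^k X\rangle=(A_\xi^k X)(f_k)$, because $A_\xi^k X=\sum_i\langle A_\xi^k X,e_i\rangle\,e_i$ acts as a derivation on $f_k$. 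This produces the leading summand of \eqref{E-divCN-Z} and reduces the whole statement to computing $\langle\Div_\calf A_\xi^k,X\rangle$.

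For the core computation I expand $\nabla^\calf_{e_i}A_\xi^k=\sum_{j=1}^k A_\xi^{j-1}(\nabla^\calf_{e_i}A_\xi)A_\xi^{k-j}$, contract with $e_i$, pair with $X$, and move $A_\xi^{j-1}$ into the right slot and $A_\xi^{k-j}$ into the left slot by self-adjointness, reaching $\sum_{j=1}^k\sum_i\langle A_\xi^{k-j}e_i,(\nabla^\calf_{e_i}A_\xi)(A_\xi^{j-1}X)\rangle$. Now the Codazzi type identity \eqref{E-codazziAxi}, applied to the pair $(e_i,A_\xi^{j-1}X)$, replaces $(\nabla^\calf_{e_i}A_\xi)(A_\xi^{j-1}X)$ by $(\nabla^\calf_{A_\xi^{j-1}X}A_\xi)e_i-R^P(e_i,A_\xi^{j-1}X)\xi$. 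Summed over $i$, the first part gives $\tr_\calf(A_\xi^{k-j}\nabla^\calf_{A_\xi^{j-1}X}A_\xi)$, which by the last identity of Lemma~\ref{L-NTprop} equals $\tfrac{1}{k-j+1}(A_\xi^{j-1}X)(\tau_{k-j+1}(\xi))$; the curvature part, using the definition of ${\mathcal R}^P_{\,\cdot\,,\xi}$ together with self-adjointness of $A_\xi^{k-j}$, gives a trace of the form $\tr_\calf(A_\xi^{k-j}{\mathcal R}^P_{\,\cdot\,,\xi})$ with direction $A_\xi^{j-1}X$. Tracking the accumulated signs through the linearity of $R^P$ (hence of ${\mathcal R}^P$) in its vector arguments recasts the direction as $(-A_\xi)^{j-1}X$, which is exactly the second summand of \eqref{E-divCN-Z}. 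Note that only the $T\calf$-component of the curvature survives the pairing, which is precisely why ${\mathcal R}^P_{\,\cdot\,,\xi}$ carries the projection $(\cdot)^\top$.

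For part (b) the cleanest route is not to substitute $f_j=(-1)^j\sigma_{r-j}$ term by term, but to run the Newton recursion \eqref{E-defNTind}, $T_r(A_\xi)=\sigma_r(\xi)\id-A_\xi T_{r-1}(A_\xi)$, directly through $\Div_\calf$. Differentiating and contracting gives $\Div_\calf T_r=\sum_i e_i(\sigma_r)\,e_i-\sum_i(\nabla^\calf_{e_i}A_\xi)\,T_{r-1}(A_\xi)e_i-A_\xi\,\Div_\calf T_{r-1}$. Pairing with $X$, the first term is $X(\sigma_r(\xi))$; in the middle term, self-adjointness and Codazzi produce $\tr_\calf(T_{r-1}(A_\xi)\nabla^\calf_X A_\xi)-\tr_\calf(T_{r-1}(A_\xi){\mathcal R}^P_{X,\xi})$, and the fourth identity of Lemma~\ref{L-NTprop} evaluates $\tr_\calf(T_{r-1}(A_\xi)\nabla^\calf_X A_\xi)=X(\sigma_r(\xi))$. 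Thus the two $X(\sigma_r(\xi))$ contributions cancel and one is left with the recursion $\langle\Div_\calf T_r(A_\xi),X\rangle=\tr_\calf(T_{r-1}(A_\xi){\mathcal R}^P_{X,\xi})-\langle\Div_\calf T_{r-1}(A_\xi),A_\xi X\rangle$. Unrolling down to $T_0(A_\xi)=\id$ with $\Div_\calf\id=0$, the minus sign in front of the recursive term turns the accumulated direction into $(-A_\xi)^{j-1}X$ and yields exactly \eqref{E-divNTk-1}.

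The main obstacle is the index-and-sign bookkeeping in the iterated use of Codazzi: one must keep track of which power of $A_\xi$ lands in the trace factor ($A_\xi^{k-j}$, respectively $T_{r-j}(A_\xi)$) versus in the direction of ${\mathcal R}^P_{\,\cdot\,,\xi}$, and pair the scalar identities of Lemma~\ref{L-NTprop} so that in part (b) every $\sigma$-derivative telescopes away. The two structural facts that make everything go through are the self-adjointness of $A_\xi$, $A_\xi^k$, $T_r(A_\xi)$ and of $\nabla^\calf_{e_i}A_\xi$, which lets slots be transposed freely, and the fact that the curvature contributions enter only through their $T\calf$-projection, which is the content of the operator ${\mathcal R}^P_{\,\cdot\,,\xi}$.
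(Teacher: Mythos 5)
Your part (b) is correct and is in fact the paper's own route: the paper likewise does \emph{not} substitute $f_j=(-1)^j\sigma_{r-j}$ into (a), but pushes the recursion \eqref{E-defNTind} through $\Div_\calf$, cancels the two $X(\sigma_r(\xi))$ contributions via the fourth identity of Lemma~\ref{L-NTprop}, and unrolls
$\<\Div_\calf T_r(A_\xi),X\>=\<\Div_\calf T_{r-1}(A_\xi),-A_\xi X\>+\tr_{\,\calf}(T_{r-1}(A_\xi)\,{\mathcal R}^P_{X,\xi})$,
which is exactly \eqref{E-divNT-proof2}; there the factor $(-A_\xi)^{j-1}$ is genuinely generated, one minus sign per unrolling step, by the $-A_\xi T_{r-1}(A_\xi)$ in the Newton recursion. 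In part (a), your one-shot Leibniz expansion $\nabla^\calf_{e_i}A_\xi^k=\sum_{j=1}^k A_\xi^{j-1}(\nabla^\calf_{e_i}A_\xi)A_\xi^{k-j}$ is just the unrolled form of the paper's recursion \eqref{E-divCk-proof1}, so that difference is cosmetic, and your treatment of the terms $(A_\xi^kX)(f_k)$ and $\frac{1}{k-j+1}(A_\xi^{j-1}X)(\tau_{k-j+1}(\xi))$ is sound.

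The gap is in (a), precisely at the sentence ``tracking the accumulated signs \dots recasts the direction as $(-A_\xi)^{j-1}X$.'' In a one-shot expansion nothing accumulates: each $j$-term involves exactly one application of \eqref{E-codazziAxi}, namely $(\nabla^\calf_{e_i}A_\xi)(A_\xi^{j-1}X)=(\nabla^\calf_{A_\xi^{j-1}X}A_\xi)e_i-R^P(e_i,A_\xi^{j-1}X)\,\xi$, so every curvature term picks up one and the same factor $-1$, giving $\tr_{\,\calf}\big(A_\xi^{k-j}\,{\mathcal R}^P_{\,-A_\xi^{j-1}X,\,\xi}\big)$ uniformly in $j$; by linearity of ${\mathcal R}^P$ this coincides with the displayed $\tr_{\,\calf}\big(A_\xi^{k-j}\,{\mathcal R}^P_{\,(-A_\xi)^{j-1}X,\,\xi}\big)$ only when $j$ is even, so the asserted sign pattern does not follow from your computation. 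The discrepancy is already visible at $k=j=1$ and can be tested against your own part (b): at $r=1$, \eqref{E-divNTk-1} together with $T_1(A_\xi)=\sigma_1(\xi)\id_{\,T\calf}-A_\xi$ forces $\<\Div_\calf A_\xi,X\>=X(\tau_1(\xi))-\tr_{\,\calf}{\mathcal R}^P_{X,\xi}$, with a minus, whereas the displayed (a) at $j=1$ has a plus. Note also that the uniform-minus version is the one from which (b) actually follows by substitution: putting $f_k=(-1)^k\sigma_{r-k}$ and reindexing via \eqref{E-intNT-C}, the $(-1)^k$ carried by $f_k$ combines with the single $-1$ to produce exactly $\sum_j\tr_{\,\calf}(T_{r-j}(A_\xi)\,{\mathcal R}^P_{(-A_\xi)^{j-1}X,\xi})$. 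So as written your proof of (a) does not establish the displayed formula; you should redo the bookkeeping honestly (your expansion then yields the curvature direction $-A_\xi^{j-1}X$ in all terms) and explicitly address the sign disagreement with the printed statement in the odd-$j$ terms --- this is the one delicate point of the proposition, and it is also where the paper's own induction (compare \eqref{E-divCk-proof2} with \eqref{E-divCk-1}) is least watertight, so glossing it with ``accumulated signs'' is exactly the wrong place to hand-wave.
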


\proof (a) We will calculate at a point $x\in M$. One may assume $\nabla_{e_i}\,\xi|_{\,x}\in T_x\calf$ for all $i$.
Decomposing $A_\xi^{k}=A_\xi A_\xi^{k-1}$ for $k\ge1$, we get at a point $x$,
%see~\eqref{eq:div1-part},
\begin{equation}\label{E-divCk-proof1}
 \Div_\calf A_\xi^{k} = A_\xi \Div_\calf A_\xi^{k-1} +\sum\nolimits_{\,i=1}^n(\nabla^{\calf}_{e_i} A_\xi) A^{k-1}_\xi e_i.
\end{equation}
Since (\ref{E-divCk-proof1}) is tensorial, it is valid for any point of $M$.
Using \eqref{E-codazziAxi}, we find for $X\in T_x\calf$,
\begin{eqnarray*}
 &&\sum\nolimits_{\,i=1}^n\<(\nabla^{\calf}_{e_i}A_\xi)A_\xi^{k-1} e_i, \,X\>
   =\sum\nolimits_{\,i=1}^n\<A^{k-1}_\xi e_i, \,(\nabla^{\calf}_{e_i} A_\xi) X\>\\
 &&=\sum\nolimits_{\,i=1}^n\<A^{k-1}_\xi e_i, (\nabla^{\calf}_{X}A_\xi)\,e_i +R^P(X, e_i)\,\xi
 %+\nabla_{[e_i, X]^\perp} \,\xi
 \> \\
 &&=\tr_{\,\calf}(A^{k-1}_\xi\nabla^{\calf}_{X} A_\xi)
 +\tr_{\,\calf}(A^{k-1}_\xi\,{\mathcal R}^P_{\,X, \,\xi}).
% +\sum\nolimits_{\,i=1}^n\<R^P(X, e_i)\xi, A^{k-1}_\xi e_i\>.
\end{eqnarray*}
%Here, we used $[e_i, X]^\perp=0$.
For $X\in T_x\calf$ and for $k\ge1$, (\ref{E-divCk-proof1}) gives us
\begin{equation*}
%\label{E-divCk-proof2a}
%\nonumber
 \<\,\Div_\calf A_\xi^{k}, \,X\> = -\<A_\xi \Div_\calf A_\xi^{k-1}, \,X\> +\tr_{\,\calf}(A_\xi^{k-1}\nabla^{\calf}_{X} A_\xi)
  +\tr_{\,\calf}(A^{k-1}_\xi\,{\mathcal R}^P_{\,X, \,\xi}).
% +\sum\nolimits_{\,i=1}^n\<R^P(X, e_i)\xi, A^{k-1}_\xi e_i\>.
\end{equation*}
 The above and the last identity of Lemma~\ref{L-NTprop}, yield the inductive formula
\begin{eqnarray}\label{E-divCk-proof2}
 \<\Div_\calf A^{k}_\xi, \,X\> = \< \Div_\calf A_\xi^{k-1}, \,-A_\xi X\> +\frac1k\,X(\tau_k(\xi))
 +\tr_{\,\calf}(A^{k-1}_\xi\,{\mathcal R}^P_{\,X, \,\xi}).
% +\sum\nolimits_{\,i=1}^n\<R^P(X, e_i)\xi, A^{k-1}_\xi e_i\>.
\end{eqnarray}
%(\ref{E-divCk-proof2}),
%see (\ref{E-k1trCC}).
By induction, from (\ref{E-divCk-proof2}) we obtain the following for $k\ge1$:
\begin{equation}\label{E-divCk-1}
%\nonumber
  \<\Div_\calf A_\xi^{k}, X\> {=} \sum\nolimits_{\,j=1}^k\big(\frac1{k{-}j{+}1}\,(A^{j-1}_\xi X)(\tau_{k-j+1}(\xi))
  +\tr_{\,\calf}(A^{k-j}_\xi\,{\mathcal R}^P_{\,(-A_\xi)^{j-1}X, \,\xi})
 %+\sum\nolimits_{\,i=1}^n\<R^P((-A_\xi)^{j-1} X, e_i)\xi, A^{k-j}_\xi e_i\>
 \big).
\end{equation}
For the (1,1)-tensor ${\mathcal A}_\xi$, see \eqref{E-barCN-1}, we get
%The integrand of the first term in the second line of (\ref{E-divChN}) is
\begin{equation*}
 \< \Div_\calf{\mathcal A}_\xi, \,X\>
 =\sum\nolimits_{\,k<n}\big(\<\nabla^{\calf} f_k, A^{k}_\xi X\> +f_k\< \Div_\calf A^{k}_\xi, \,X\>\big).
\end{equation*}
Thus, \eqref{E-divCN-Z} follows from \eqref{E-divCk-1}.

(b) By inductive definition (\ref{E-defNTind}), we obtain the following for $r>0$, compare with \eqref{E-divCk-proof1}:
%{E-divCk-proof2a},
\begin{equation*}
 \Div_\calf T_r(A_\xi)=\nabla^{\calf}\sigma_r(\xi) - A_\xi \Div_\calf T_{r-1}(A_\xi)
 -\sum\nolimits_{\,i=1}^n(\nabla^{\calf}_{e_i} A_\xi) (T_{r-1}(A_\xi) e_i).
\end{equation*}
Note that the (1,1)-tensor $\nabla^{\calf}_{e_i} A_\xi$ is self-adjoint.
Then, using the third identity of Lemma~\ref{L-NTprop},
and Codazzi type equation \eqref{E-codazziAxi}, we get, compare with \eqref{E-divCk-proof2},
\begin{equation}\label{E-divNT-proof2}
 \<\Div_\calf T_r(A_\xi), \,X\> = \<\Div_\calf T_{r-1}(A_\xi), \,(-A_\xi)X\>
 +\tr_{\,\calf}(T_{r-1}(A_\xi)\,{\mathcal R}^P_{\,X, \,\xi}).
% +\sum\nolimits_{\,i=1}^n\<R^P(e_i, X)\xi, \,T_{r-1}(A_\xi) e_i\>.
\end{equation}
From \eqref{E-divNT-proof2} by induction we obtain \eqref{E-divNTk-1}.
%see also \eqref{E-divCN-Z} with $f_j=(-1)^j\sigma_{r-j}(\xi)$ and $j\le r$.
\hfill$\square$

\begin{example}
%\label{Exe-2-11}
%\label{L-divNT-1}
\rm
(a) For $k=1$, \eqref{E-divCk-1} reads as
\begin{equation*}
%\label{E-divCk-1}
 \<\Div_\calf A_\xi, X\> = X(\tau_{1}(\xi)) +\tr_{\,\calf}{\mathcal R}^P_{X,\xi}.
% \sum\nolimits_{\,i=1}^n\<R^P(X, e_i)\xi, e_i\>.
\end{equation*}
(b) Let the distribution $T\calf$ be $P$-\textit{curvature invariant}, that is
\begin{equation}\label{E-Pcurv-inv}
 R^P(X,Y)V \in T\calf\quad (X,Y,V\in T\calf).
\end{equation}
Then, in view of \eqref{E-R-symm2}, equation \eqref{E-divNTk-1} implies that $\Div_\calf T_r(A_\xi)=0$ for every $r$ and $\xi$.
Note that \eqref{E-Pcurv-inv} is satisfied, if $T\calf$ is
auto-parallel, i.e., $\nabla_XY\in\Gamma(T\calf)$ for all $X,Y\in\Gamma(T\calf)$.

A sufficient condition for \eqref{E-Pcurv-inv} is the following equality:
\begin{equation}\label{E-Pcurv-c}
 R^P(X,Y)Z= c\,(\/\<Y,Z\>X-\<X,Z\>Y\/),\quad X,Y,Z\in{\mathcal D},
\end{equation}
for some constant
%smooth function
$c\in\RR$.
% on $M$, e.g., $(M,{\mathcal D},g)$ has constant sectional $P$-curvature.
\end{example}

\begin{remark}\rm
 Let $\calf$ be a foliation of a Riemannian manifold $(M,g)$ and ${\mathcal D}=TM$; thus, $R^P=R$.
Using the symmetry $\<R(X,Y)U, \,V\>=\<R(U,V)X, \,Y\>$ of the curvature tensor, we simplify \eqref{E-divNT-proof2} to the following form:
\begin{equation*}
%\label{E-divNTk-1}
 \Div_\calf T_r(A_\xi) = - A_\xi\Div_\calf T_{r-1}(A_\xi) +\sum\nolimits_{\,i=1}^n (R(\xi, \,T_{r-1}(A_\xi) e_i)e_i)^\top,
\end{equation*}
where $^\top$ denotes the orthogonal projection on the vector subbundle $T\calf$,
see \cite[Proposition~2.9]{r8} and \cite[Lemma~2.2]{aw2010}.
%moreover, \eqref{E-divNTk-1} reduces to
%\begin{eqnarray*}
%%\label{E-divNTN}
% \<\Div_\calf T_r(A_\xi),\,X\>
% \eq\sum\nolimits_{\,j=1}^r\sum\nolimits_{\,i=1}^n\<{\mathcal R}_{(-A_\xi)^{j-1}X, \,\xi}(e_i), \,T_{r-j}(A_\xi) e_i\> \\
% \eq \sum\nolimits_{\,j=1}^r\tr\,(T_{r-j}(A_\xi)\,{\mathcal R}_{(-A_\xi)^{j-1}X, \,\xi}),
%\end{eqnarray*}
%where the linear operator ${\mathcal R}_{X, \,\xi}:T\calf\to T\calf$ is given by
%%\begin{equation*}
%%\label{E-operRYN}
% ${\mathcal R}_{\,X, \,\xi}: Y \to R(Y, X)\xi$.
%% \quad (V\in T\calf).
%%\end{equation*}
\end{remark}

\section{The $\calf$-divergence of vector fields on ${\mathcal D}$}

For any local vector field $\xi$ in ${\mathcal N}_1\calf$, put
\[
 Z_{\,\xi}=(\nabla_{\xi}\,\xi)^\top.
\]
%  for short.
%Recall that $f_k=f_k(\overset{\rightarrow}\tau(\xi))$.
%Let $\{\xi_a\}_{\,a=1}^p$ be a local orthonormal frame in ${\mathcal N}\calf$.
%Define the horizontal vector field
%\[
% Z=\sum\nolimits_{\,a}(\nabla_{\xi_a}\,\xi_a)^\top.
%\]
%and note that $H^\bot=\sum\nolimits_{\,a}\nabla_{\xi}\,\xi$ is the mean curvature vector of ${\mathcal N}\calf$ in $(M,g)$.
The $\calf$-{divergence} of a vector~field $X$ on $M$ is defined by
\[
 \Div_\calf X
 %=\operatorname{trace}(Y\to\nabla_{Y} X)
 =\sum\nolimits_{\,i=1}^n\<\nabla_{e_i}\,X, \,e_i\>,
\]
where $\{e_i\}$ is a local orthonormal frame of $T\calf$.

Let ${\mathcal N}_1\calf\subset{\mathcal N}\calf$ be a subbundle of unit vectors orthogonal to $T\calf$ in~${\mathcal D}$,
$({\mathcal N}_1\calf)_x$ its fiber (a unit sphere) at a point $x\in M$,
${\rm d}\,\omega^\perp_x$ the volume form on $({\mathcal N}_1\calf)_x$ with the induced metric.

Applying Lemma~\ref{L-31-myC} given below,
%and $f_j=(-1)^j\sigma_{r-j}(\xi)\ (j\le r)$
and (\ref{E-divCN-Z}), we generalize \cite[Proposition~2.8]{r8}.

\begin{proposition}\label{P-Cintfiv01}
%If the distribution ${\mathcal D}$ is {harmonic}, then
The divergence of the vector field $\int_{\,({\mathcal N}_1\calf)_x}{\mathcal A}_\xi Z_{\,\xi}\,{\rm d}\,\omega^\bot_x$
at a point $x\in M$~is
%given by
\begin{eqnarray*}
%\label{E-divFCh}
&& \Div_{\calf}\int_{\,({\mathcal N}_1\calf)_x}{\mathcal A}_\xi Z_{\,\xi}\,{\rm d}\,\omega^\bot_x =
 \int_{\,({\mathcal N}_1\calf)_x}\Big(\underline{\<\Div_\calf{\mathcal A}_\xi, \,Z_{\,\xi}\>} +\tr_{\,\calf}({\mathcal A}_\xi {\mathcal R}^P_{\,\xi,\xi}) \\
%\nonumber
&& +\sum\nolimits_{\,k<n}\big( f_k\,\tau_{k+2}(\xi) -\frac{f_k}{k+1}\,\xi(\tau_{k+1}(\xi))\big)
+\sum\nolimits_{\,a=1}^p\<\,{\mathcal A}_\xi(\nabla_{e_a} \xi)^\top, \,\nabla_{\xi}\,e_a\>\Big)\,{\rm d}\,\omega^\bot_x,
\end{eqnarray*}
where the underlined term is given in \eqref{E-divCN-Z} with $X=Z_{\,\xi}$. In particular,
%for Newton transformations of $A_\xi$ we find
the divergence of the vector field $\int_{\,({\mathcal N}_1\calf)_x} T_r(A_\xi)Z_{\,\xi}\,{\rm d}\,\omega^\bot_x$ at a point $x\in M$ is
\begin{eqnarray}\label{E-divFNTh}
\nonumber
 && \Div_{\calf}\int_{\,({\mathcal N}_1\calf)_x} T_r(A_\xi)Z_{\,\xi}\,{\rm d}\,\omega^\bot_x
 =\int_{\,({\mathcal N}_1\calf)_x}\!\Big(\underline{\<\Div_\calf T_r(A_\xi), \,Z_{\,\xi}\>} \\
\nonumber
 && -\,\xi(\sigma_{r+1}(\xi)) -(r+2)\,\sigma_{r+2}(\xi) +\sigma_{1}(\xi)\,\sigma_{r+1}(\xi)+\tr_{\,\calf}(T_r(A_\xi) {\mathcal R}^P_{\,\xi, \,\xi}) \\
 && +\sum\nolimits_{\,a=1}^p\<\,T_r(A_\xi)(\nabla_{e_a} \xi)^\top, \nabla_{\xi}\, e_a\>\Big)\,{\rm d}\,\omega^\bot_x,
\end{eqnarray}
where the underlined term is given by \eqref{E-divNTk-1} with $X=Z_\xi$.
\end{proposition}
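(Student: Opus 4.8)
The plan is to compute the $\calf$-divergence of the vector field $W_x := \int_{({\mathcal N}_1\calf)_x}{\mathcal A}_\xi Z_{\,\xi}\,{\rm d}\,\omega^\bot_x$ by differentiating under the integral sign along leaf directions $e_i$, and then to identify each of the resulting terms with the expressions displayed on the right-hand side. The point-wise identity $\Div_\calf W_x = \sum_{i=1}^n \<\nabla_{e_i} W_x,\,e_i\>$ must be handled carefully because the integrand depends on the auxiliary variable $\xi\in({\mathcal N}_1\calf)_x$: differentiating $\int_{({\mathcal N}_1\calf)_x}\cdots\,{\rm d}\,\omega^\bot_x$ in a base direction $e_i$ produces, besides the term in which $e_i$ hits the integrand, a correction coming from the variation of the fiber sphere $({\mathcal N}_1\calf)_x$ and of its volume form ${\rm d}\,\omega^\bot_x$ as $x$ moves. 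I expect this ``differentiation of a fiber integral'' step to be the technical heart, and it is presumably the content of the announced Lemma~\ref{L-31-myC} (referenced but not yet proved in the excerpt); I would invoke that lemma to convert $\Div_\calf W_x$ into a fiber integral of an integrand built from covariant derivatives at fixed $\xi$ plus boundary-type corrections over the sphere.

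Once the differentiation is reduced to the fiber, the plan is to expand $\<\nabla_{e_i}({\mathcal A}_\xi Z_\xi),\,e_i\>$ by the Leibniz rule into a term where the connection hits ${\mathcal A}_\xi$ and a term where it hits $Z_\xi$. Summing over $i$, the first group yields $\<\Div_\calf{\mathcal A}_\xi,\,Z_\xi\>$ (the underlined term, already computed in \eqref{E-divCN-Z} with $X=Z_\xi$). The second group requires unwinding $Z_\xi=(\nabla_\xi\xi)^\top$ and using the definition of $Z_\xi$ together with the curvature operator ${\mathcal R}^P_{\,\xi,\xi}$: here I would write $\sum_i\<\,{\mathcal A}_\xi\nabla_{e_i}^\calf Z_\xi,\,e_i\>$ and recognize, after commuting derivatives and applying the Codazzi type equation \eqref{E-codazziAxi}, the trace term $\tr_{\,\calf}({\mathcal A}_\xi{\mathcal R}^P_{\,\xi,\xi})$ as well as the scalar sum $\sum_{k<n}\big(f_k\,\tau_{k+2}(\xi)-\tfrac{f_k}{k+1}\,\xi(\tau_{k+1}(\xi))\big)$. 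The last summand, $\sum_{a=1}^p\<\,{\mathcal A}_\xi(\nabla_{e_a}\xi)^\top,\,\nabla_\xi e_a\>$, is exactly the contribution that arises when the normal field $e_a$ spanning ${\mathcal N}\calf$ is differentiated; it encodes how the normals to $\calf$ twist inside ${\mathcal D}$, and it is the genuinely sub-Riemannian term absent from the purely Riemannian computation of \cite[Proposition~2.8]{r8}.

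For the scalar-derivative terms I would lean on Lemma~\ref{L-NTprop}, particularly the power-sum identity $k\,\tr_\calf(A_\xi^{k-1}\nabla_X^\calf A_\xi)=X(\tau_k(\xi))$, to turn traces of products $A_\xi^{k}\nabla^\calf A_\xi$ into derivatives of the invariants $\tau_{k+1}(\xi),\tau_{k+2}(\xi)$; the factor $\tfrac1{k+1}$ in the displayed formula is precisely the normalization from that lemma. The specialization to ${\mathcal A}_\xi=T_r(A_\xi)$ giving \eqref{E-divFNTh} then follows by substituting $f_j=(-1)^j\sigma_{r-j}$ and simplifying the scalar block using the trace identities $\tr_\calf(A_\xi T_r(A_\xi))=(r+1)\sigma_{r+1}(\xi)$ and $\tr_\calf(A_\xi^2 T_r(A_\xi))=\sigma_1(\xi)\sigma_{r+1}(\xi)-(r+2)\sigma_{r+2}(\xi)$ from Lemma~\ref{L-NTprop}, which collapse the sum over $k$ into the two explicit terms $-(r+2)\sigma_{r+2}(\xi)+\sigma_1(\xi)\sigma_{r+1}(\xi)$ and the single derivative $-\xi(\sigma_{r+1}(\xi))$. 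The main obstacle, to reiterate, is the rigorous justification of moving $\Div_\calf$ inside the fiber integral and correctly accounting for the variation of $({\mathcal N}_1\calf)_x$ with $x$; everything after that is bookkeeping with the Codazzi equation and the Newton-transformation trace identities.
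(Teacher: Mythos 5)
Your proposal follows essentially the same route as the paper: differentiate under the fiber integral (which the paper justifies by normalizing at the point $x$ --- an adapted frame with $\<\nabla_X e_a,\,e_b\>=0$ and an extension of $\xi$ with $\nabla_X\xi\in T_x\calf$, tensoriality then extending the identity to all of $M$), split by Leibniz into $\<\Div_\calf{\mathcal A}_\xi,\,Z_\xi\>$ plus $\sum_i\<\nabla^\calf_{e_i}Z_\xi,\,{\mathcal A}_\xi e_i\>$, evaluate the latter, convert $\tr_{\,\calf}({\mathcal A}_\xi\nabla^\calf_\xi A_\xi)$ into $\sum_{k<n}\frac{f_k}{k+1}\,\xi(\tau_{k+1}(\xi))$ via the last identity of Lemma~\ref{L-NTprop}, and specialize to $T_r(A_\xi)$ with exactly the trace identities you cite. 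Two labels need correcting, though neither affects the argument: Lemma~\ref{L-31-myC} is not the differentiation-under-the-fiber-integral step you guessed but the pointwise Riccati-type formula for $\<\nabla^{\calf}_{e_i} Z_{\,\xi}, e_j\>$, in whose proof the curvature term arises from directly commuting derivatives of $\xi$ rather than from the Codazzi equation \eqref{E-codazziAxi}, which enters only upstream in the underlined term via \eqref{E-divCN-Z}.
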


\proof
 Assume that $\nabla_X \xi\in T_x\calf$ for any $X\in T_x M$ at a point $x\in M$, and calculate
%the divergence of the vector field $\int_{\,({\mathcal N}_1\calf)_x}{\mathcal A}_\xi Z_{\,\xi}\,{\rm d}\,\omega$,
\begin{eqnarray}\label{E-divChN}
\nonumber
 && \Div_{\calf}\int_{\,({\mathcal N}_1\calf)_x}{\mathcal A}_\xi Z_{\,\xi}\,{\rm d}\,\omega^\bot_x =
 \sum\nolimits_{\,i=1}^n\int_{\,({\mathcal N}_1\calf)_x}\big\<\nabla_{e_i} ({\mathcal A}_\xi Z_{\,\xi}), e_i\big\>\,{\rm d}\,\omega^\bot_x \\
&&=\int_{\,({\mathcal N}_1\calf)_x}\big\< \Div_\calf{\mathcal A}_\xi, \,Z_{\,\xi}\big\>\,{\rm d}\,\omega^\bot_x
 +\sum\nolimits_{\,i=1}^n\int_{\,({\mathcal N}_1\calf)_x}\big\<\nabla^\calf_{e_i}Z_{\,\xi},\, {\mathcal A}_\xi\,e_i\big\>\,{\rm d}\,\omega^\bot_x.
\end{eqnarray}
%In order to compute the divergence in (\ref{E-divChN}), we must find $\<\nabla_{e_i}Z_{\,\xi}, {\mathcal A}_\xi\,e_i\>$.
By Lemma~\ref{L-31-myC} in what follows, we find the integrand
$\sum\nolimits_{\,i=1}^{\,n}\<\nabla^\calf_{e_i}Z_{\,\xi}, {\mathcal A}_\xi\,e_i\>$ in~(\ref{E-divChN}),
\begin{equation}\label{E-termA-nabla-A}
 \sum\nolimits_{\,k<n} f_k\tau_{k+2}(\xi)+\tr_{\,\calf}({\mathcal A}_\xi {\mathcal R}^P_{\,\xi, \,\xi})
 -\tr_{\,\calf}({\mathcal A}_\xi(\nabla^{\calf}_\xi A_\xi))
 +\sum\nolimits_{\,a=1}^p\<{{\mathcal A}_\xi}(\nabla_{e_a} \xi)^\top, \nabla_{\xi}\,e_a\>.
\end{equation}
We transform the term $\tr_{\,\calf}({\mathcal A}_\xi\nabla^\calf_\xi A_\xi)$ in \eqref{E-termA-nabla-A},
using the definition ${\mathcal A}_\xi=\sum_{\,k<n} f_k A_\xi^k$, see \eqref{E-barCN-1}, and the last identity of Lemma~\ref{L-NTprop}, as
\begin{equation*}
 \tr_{\,\calf}({\mathcal A}_\xi\nabla^{\calf}_\xi A_\xi) = \sum\nolimits_{\,k<n}\frac{f_k}{k+1}\,\xi(\tau_{k+1}(\xi)).
\end{equation*}
Finally, we conclude that
\begin{eqnarray*}
 && \sum\nolimits_{\,i=1}^n\<\nabla^\calf_{e_i}Z_{\,\xi}, \,{\mathcal A}_\xi\,e_i\> = \tr_{\,\calf}({\mathcal A}_\xi {\mathcal R}^P_{\,\xi, \,\xi}) \\
 && +\sum\nolimits_{\,k<n} \big(f_k\tau_{k+2}(\xi) -\frac{f_k}{k+1}\,\xi(\tau_{k+1}(\xi))\big)
 +\sum\nolimits_{\,a=1}^p\big\<{{\mathcal A}_\xi}(\nabla_{e_a} \xi)^\top, \,\nabla_{\xi}\,e_a\big\>.
\end{eqnarray*}
Since the result
%(\ref{E-divFCh})
is tensorial, it is valid for any $x\in M$.
The proof of \eqref{E-divFNTh} is similar.
\hfill$\square$

\begin{remark}\label{R-I00}
\rm The integrals over $({\mathcal N}_1\calf)_x$ when ${p}>1$ and $x\in M$ can be found explicitly.
To show this, denote ${\lambda}=(\lambda_1,\ldots,\lambda_{p})$ and ${y}=(y_1,\ldots, y_{p})$.
Then, see e.g., \cite{r8},
\[
 I_{\lambda}:=\int\nolimits_{\,\|y\|=1}{y}^{\lambda}\,{\rm d}\,\omega_{{p}-1}
 =\frac{2}{\Gamma\big({p}/2+(1/2)\sum_{1\le a\le {p}}\lambda_a\big)}
 \prod\nolimits_{\,1\le a\le {p}}\frac12\,(1+(-1)^{\lambda_a})\,\Gamma\big(\frac{1+\lambda_a}2\big),
\]
where ${y}^{\lambda}=\prod_{\,a\le {p}} y_a^{\lambda_a}$, and $\Gamma$ is the Gamma function.
For example,
\[
 I_{0,\dots, 0} = \frac{2\,\pi^{{p}/2}}{\Gamma({p}/2)}={\rm vol}(S^{{p}-1}(1)),\quad
 I_{2\lambda_1, 0,\dots, 0} = 2\,\pi^{\frac{{p}-1}2}\frac{\Gamma(1/2+\lambda_1)}{\Gamma({p}/2+\lambda_1)}.
\]
\end{remark}

The following lemma generalizes \cite[Lemma~3.1]{aw2010}, see also \cite[Lemma~2.7]{r8}.

\begin{lemma}\label{L-31-myC}
Let $\{e_i, e_a\}$ be an adapted orthonormal frame of ${\mathcal D}$
% adapted for $(\widetilde{\mathcal D},{\mathcal D})$
 such that

\smallskip
%$\bullet$  $\nabla^{\calf}_X e_i(x)=0$ and $P(\nabla_X e_a(x))=0$ for any vector $X\in T_xM$.
$\bullet$ $\nabla^\calf_X\,e_i=0\ (1\le i\le n)$ and $\<\nabla_X e_a,\,e_b\>=0\ (1\le a,b\le p)$ for any vector $X\in T_xM$;

\smallskip

$\bullet$ $\nabla^P_\mu\,e_i=0\ (1\le i\le n)$ for any vector $\mu\in \widetilde{\mathcal D}_x$ at a point $x\in M$.

\smallskip
\noindent
Then for any unit vector $\xi\in ({\mathcal N}_1\calf)_x$ we have
\begin{equation*}
 %\label{E-EiNNC}
 \<\nabla^{\calf}_{e_i} Z_{\,\xi}, e_j\> = \<A^2_\xi e_{i}, e_{j}\> {+} \<R^P(e_i,\xi)\xi, e_j\> {-}\<(\nabla^{\calf}_\xi \,A_\xi)e_i,e_j\>
 +\!\sum\nolimits_{\,a=1}^p\<{\nabla_{\xi}\,e_a}, e_i\>\<\nabla_{e_a}\xi, e_j\>.
\end{equation*}
\end{lemma}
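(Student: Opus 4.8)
The plan is to work at a fixed point $x$, use the adaptedness of the frame to reduce the left-hand side to a single ambient second covariant derivative of $\xi$, and then expand that derivative with the Riemann operator, matching the resulting pieces to the terms on the right.

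For the reduction, note that $Z_\xi\in\Gamma(T\calf)$ and $e_j\in T\calf$, so $\langle\nabla^\calf_{e_i}Z_\xi,e_j\rangle=\langle\nabla_{e_i}Z_\xi,e_j\rangle$; since $\nabla^\calf_{e_i}e_j=0$ forces $\nabla_{e_i}e_j\perp T\calf$, the product rule collapses this to $e_i\langle Z_\xi,e_j\rangle=e_i\langle\nabla_\xi\xi,e_j\rangle$. I would choose the extension of $\xi$ so that its $\mathcal N\calf$-normal connection vanishes at $x$ (this is the substance of the working hypothesis $\nabla_X\xi\in T_x\calf$ used in the proof of the Proposition). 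Expanding $e_i\langle\nabla_\xi\xi,e_j\rangle=\langle\nabla_{e_i}\nabla_\xi\xi,e_j\rangle+\langle\nabla_\xi\xi,\nabla_{e_i}e_j\rangle$, the final term is a pairing of $\widetilde{\mathcal D}$-components (namely $\langle\Pi(\xi,\xi),\Pi(e_i,e_j)\rangle$ in the notation below) that will cancel later, so the core quantity is $\langle\nabla_{e_i}\nabla_\xi\xi,e_j\rangle$.

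Next I would write $\nabla_{e_i}\nabla_\xi\xi=R(e_i,\xi)\xi+\nabla_\xi\nabla_{e_i}\xi+\nabla_{[e_i,\xi]}\xi$ and pair with $e_j$. Using that $\nabla_{e_i}\xi$ equals $-A_\xi e_i$ on $T\calf$ together with metric compatibility of $\nabla^\calf$ and $\nabla^\calf_\xi e_i=\nabla^\calf_\xi e_j=0$, the middle piece contributes $-\langle(\nabla^\calf_\xi A_\xi)e_i,e_j\rangle$, up to one further $\widetilde{\mathcal D}$-pairing treated below. For the bracket piece I expand $[e_i,\xi]=\nabla_{e_i}\xi-\nabla_\xi e_i$ and split each summand into its $T\calf$-, $\mathcal N\calf$- and $\widetilde{\mathcal D}$-parts (the $T\calf$-part of $\nabla_\xi e_i$ is $\nabla^\calf_\xi e_i=0$): the tangential $-A_\xi e_i$ produces $\langle A_\xi^2 e_i,e_j\rangle$, the $\mathcal N\calf$-part of $\nabla_\xi e_i$ produces $\sum_a\langle\nabla_\xi e_a,e_i\rangle\langle\nabla_{e_a}\xi,e_j\rangle$ via the skew-symmetry $\langle\nabla_\xi e_i,e_a\rangle=-\langle\nabla_\xi e_a,e_i\rangle$, and every $\widetilde{\mathcal D}$-directional summand is of the form $-\langle\xi,\nabla_\mu e_j\rangle$ with $\mu\in\widetilde{\mathcal D}$, hence vanishes because the hypothesis $\nabla^P_\mu e_j=0$ places $\nabla_\mu e_j$ in $\widetilde{\mathcal D}\perp\xi$. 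This is exactly where the third frame condition is indispensable.

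The main obstacle is converting the ambient $\langle R(e_i,\xi)\xi,e_j\rangle$ into the induced $\langle R^P(e_i,\xi)\xi,e_j\rangle$. For this I would first establish a Gauss-type identity for the splitting $TM=\mathcal D\oplus\widetilde{\mathcal D}$: writing $\Pi(X,W):=(\nabla_X W)^{\widetilde{\mathcal D}}$ for $W\in\Gamma(\mathcal D)$, so that $\nabla_X W=\nabla^P_X W+\Pi(X,W)$, a short computation yields
\[
 \langle R(X,Y)W,V\rangle=\langle R^P(X,Y)W,V\rangle-\langle\Pi(Y,W),\Pi(X,V)\rangle+\langle\Pi(X,W),\Pi(Y,V)\rangle,\quad V\in\mathcal D .
\]
Specialising $(X,Y,W,V)=(e_i,\xi,\xi,e_j)$ produces the two correction terms $-\langle\Pi(\xi,\xi),\Pi(e_i,e_j)\rangle$ and $\langle\Pi(e_i,\xi),\Pi(\xi,e_j)\rangle$; the first cancels the leftover pairing from the reduction step, and the second cancels the $\widetilde{\mathcal D}$-contribution that appears in $\langle\nabla_\xi\nabla_{e_i}\xi,e_j\rangle$ once that term is computed without discarding normal parts. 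After these cancellations exactly the four asserted terms remain. If instead one adopts the stronger reading $\nabla_X\xi\in T_x\calf$ literally, then $\Pi(e_i,\xi)=\Pi(\xi,\xi)=0$ at $x$, so $R$ and $R^P$ already agree on this combination and the Gauss identity is not even needed.
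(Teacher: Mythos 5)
Your proof is correct, but it takes a genuinely different route from the paper's. The paper never passes through the ambient curvature $R$: it differentiates the scalar identity $\<Z_{\,\xi},e_j\>=-\<\xi,\nabla_\xi e_j\>$ and then expands $\<R^P(e_i,\xi)\,e_j,\xi\>$ directly from the definition of $\nabla^P$ (as $P\nabla$), using the frame normalizations and the antisymmetry \eqref{E-R-symm2} to match terms, so that $\Pi$ and any Gauss-type equation never appear. You instead apply the ambient Ricci identity to $\nabla_{e_i}\nabla_\xi\xi$ and convert $\<R(e_i,\xi)\xi,e_j\>$ into $\<R^P(e_i,\xi)\xi,e_j\>$ via a Gauss-type identity for the subbundle ${\mathcal D}\subset TM$ with ``second fundamental form'' $\Pi$; your identity and the stated cancellations check out (the term $\<\Pi(\xi,\xi),\Pi(e_i,e_j)\>$ against the leftover of the reduction step, and $\<\Pi(e_i,\xi),\Pi(\xi,e_j)\>$ against the $\widetilde{\mathcal D}$-contribution of the middle term), and the $\widetilde{\mathcal D}$-directional bracket terms indeed die precisely by the hypothesis $\nabla^P_\mu e_j=0$. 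What your route buys: the Gauss identity is a clean, reusable structural fact, and it makes explicit that only the weaker, genuinely realizable normalization --- vanishing of the ${\mathcal N}\calf$-component of $\nabla\xi$ at $x$ --- is needed, since the $\Pi$-corrections cancel identically. This is actually more careful than the paper's literal working hypothesis ``$\nabla_X\,\xi\in T_x\calf$ for all $X\in T_xM$'', which cannot in general be arranged by a choice of extension: the component $(\nabla_X\,\xi)^{\widetilde{\mathcal D}}=\Pi(X,\xi)$ is tensorial in $\xi$, hence fixed by the geometry of ${\mathcal D}$ in $TM$. For the same reason, be aware that your closing alternative (``the stronger reading \dots\ so the Gauss identity is not even needed'') is available only when $\Pi(\cdot,\xi)=0$, e.g., when ${\mathcal D}$ is totally geodesic; your main argument, which does not rely on it, is the right one. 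By comparison, the paper's computation is more economical (no auxiliary curvature lemma) but more opaque about where the $\widetilde{\mathcal D}$-geometry enters and cancels.
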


\proof
Taking covariant derivative of $\<Z_{\,\xi},\, e_j\>=-\<\xi,\, \nabla_{\xi}\,e_j\>$ with respect to $e_i$, we find
\begin{equation}\label{E-aw-08C}
 -\<Z_{\,\xi}, \nabla_{e_i}e_j\>= \<\nabla_{e_i} Z_{\,\xi}, e_j\> +\<\nabla_{e_i}\xi, P\nabla_{\xi}\,e_j\> +\<\xi, \nabla_{e_i}P\nabla_{\xi}\,e_j\>.
\end{equation}
For a foliation $\calf$, we obtain
\[
 \<A^2_\xi e_{i}, e_{j}\> = \nabla_{\xi}\<\xi, \nabla_{e_i}\,e_j\>=\<Z_{\,\xi}, \nabla_{e_i}e_j\> +\<\xi,\nabla_{\xi}P\nabla_{e_i}\,e_j\>.
\]
Therefore, 
%using $R^P$, \eqref{E-R-k},
we calculate at the point $x\in M$:
\begin{eqnarray}\label{E-aw-09C}
\nonumber
 && \<A^2_\xi e_{i}, e_{j}\> +\<R^P(e_i,\xi)\,\xi, e_j\>-\<(\nabla^{\calf}_\xi A_\xi)e_i, e_j\> \\
\nonumber
 && =\<A^2_\xi e_{i}, e_{j}\> -\<R^P(e_i,\xi)\,e_j, \xi\>+\xi\<\nabla_{e_i} \xi, \,e_j\>\\
 && =\<A^2_\xi e_{i}, e_{j}\> -\<Z_{\,\xi}, \nabla_{e_i}e_j\>-\<\nabla_{e_i}P\nabla_{\xi}\, e_j, \xi\> +\<\nabla_{[e_i,\xi]}\,e_j, \xi\>.
\end{eqnarray}
Using (\ref{E-aw-08C}) and the following equalities at $x\in M$:
\begin{eqnarray*}
 && P\nabla_{e_i}\xi=\sum\nolimits_{\,j=1}^n\<\nabla_{e_i}\xi, e_j\>e_j,\quad
 P\nabla_{\xi}\,e_i=\sum\nolimits_{\,a=1}^p\<\nabla_{\xi}\, e_i, e_a\>\,e_a,\\
 &&
 %\<A^2_\xi\, e_{i}, e_{j}\>=\<\nabla_{e_i}\xi, \xi\>\<\nabla_{\xi}\,e_j, \xi\>,\quad
 \<A^2_\xi\, e_{i}, e_{j}\>=-\sum\nolimits_{\,k=1}^n\<\nabla_{e_i}\xi, e_k\>\<\nabla_{e_k}e_j, \xi\>,
\end{eqnarray*}
%, conditions at $x$,
%\[
% \nabla_{e_i}\xi=\sum\nolimits_{\,a\le {n}} \<\nabla_{e_i}\xi, e_a\>e_a,\quad
% \nabla_{\xi}\,e_i=\sum\nolimits_{\,a\le {n}} \<\nabla_{\xi}\,e_i, e_a\>e_a,
%\]
we simplify the last line in (\ref{E-aw-09C}) as
\begin{equation*}
 \<\nabla_{e_i}Z_{\,\xi}, e_j\>-\sum\nolimits_{\,k=1}^n\<\nabla_{\xi}\, e_k, e_i\>\<\nabla_{e_k}\, \xi,e_j\>.
% \quad  \<\nabla_{e_i}Z, e_j\>-\<Z, e_i\>\<Z, e_j\>.
\end{equation*}
From the above, the claim follows.
\hfill$\square$

\section{Main results}
\label{sec:main}

Here, we prove integral formulas for a foliated sub-Riemannian mani\-fold $(M,{\mathcal D},\calf,g)$.
%On the other hand, ${\mathcal F}$ is a foliation of a Riemannian manifold $(M,g)$, and $N$ is a unit vector field orthogonal to ${\mathcal F}$
%such that ${\mathcal D}=T\calf\oplus\,{\rm span}(N)$ is a distribution on $M$.
%We shall find integral formulas on foliated sub-Riemannian manifolds.
% with a pair of complementary distributions ${\mathcal D}$ and $\widetilde{\mathcal D}$.

The idea is to find the divergence of a suitable vector field
%$X=\int_{\,({\mathcal N}_1\calf)_x}{A}_{\xi}(Z_{\,\xi})\,{\rm d}\,\omega$, where $x\in M$
and
%integrate on a closed manifold.
apply the Divergence Theorem and Remark~\ref{lem:Sas3} in what follows to $(M,g)$ or to a compact leaf with the induced~metric.
%For a codimension-one distribution, this is simply $X={A}_{\xi}(Z_{\,\xi})$.
%To~prove the integral formulas for foliations of arbitrary codimension
We will integrate over the normal sphere bundle ${\mathcal N}_1\calf\subset{\mathcal N}\calf$ of $\calf$ with the induced metric.
%of a foliation
%a submanifold of the tangent sphere bundle with the Sasaki metric studied by several geometers. is
%consists of unit vectors orthogonal to $T\calf$ in~${\mathcal D}$.

\begin{remark}\label{lem:Sas3}\rm
 For any function $f:{\mathcal N}_1\calf\to{\mathbb R}$, we have the Fubini formula, see \cite[Note 7.1.1.1]{ber},
 \begin{equation*}
 \int_{{\mathcal N}_1\calf}f(\xi)\,{\rm d}\,\omega^\bot = \int_{M}\big(\int_{\,({\mathcal N}_1\calf)_x}f(\xi)\,{\rm d}\,\omega^\bot_x\big)\,{\rm d}\vol_{g},
 \end{equation*}
where
%${\mathcal N}_1\calf\subset{\mathcal N}\calf$ is a subbundle of unit vectors orthogonal to $T\calf$ in~${\mathcal D}$,
${\rm d}\,\omega^\perp$ is the volume form on ${\mathcal N}_1\calf$
%with the Sasaki metric,
and ${\rm d}\vol_g$ is the volume form of $(M,g)$.
\end{remark}

The next theorem with integral formulas along a compact leaf of a foliation
follows from Proposition~\ref{P-Cintfiv01} and generalizes \cite[Theorem~3.1]{r8}.

\begin{theorem}\label{T-mainLW}
Let $(M,{\mathcal D},\calf,g)$ be a foliated sub-Riemannian manifold with ${\mathcal D}=T\calf\oplus {\mathcal N}\calf$.
Then for any compact leaf $L$ of $\calf$ we have
\begin{eqnarray}\label{E-intCh-B}
\nonumber
 &&\int_{{\mathcal N}_1\calf|_{L}}\big(\<\underline{\Div_\calf {\mathcal A}_\xi, \,Z_{\,\xi}}\>
 +\sum\nolimits_{\,k<n}\big(f_k\,\tau_{k+2}({\xi}) -\frac{f_k}{k+1}\,{\xi}(\tau_{k+1}({\xi}))\big) \\
 &&
 %+\,\<{\mathcal A}_{\,\xi} Z_{\,\xi}, Z_{\,\xi}\>
 +\tr_{\,\calf}({\mathcal A}_{\,\xi}\,{\mathcal R}^P_{{\xi},\xi})
 +\sum\nolimits_{\,a=1}^p\<{\mathcal A}_{\,\xi}(\nabla_{e_a} \xi)^\top, \,\nabla_{\xi}\,e_a\>\big)\,{\rm d}\,\omega^\perp_L =0,
\end{eqnarray}
where ${\rm d}\,\omega^\perp_L$ is the volume form on ${\mathcal N}_1\calf|_{L}$,
and the underlined term is given by \eqref{E-divCN-Z} with $X=Z_\xi$.
For~$f_j=(-1)^j\sigma_{r-j}$, \eqref{E-intCh-B} gives
\begin{eqnarray}\label{E-intCh-C}
 &&\int_{{\mathcal N}_1\calf|_{L}}
 \nonumber
 \big(\<\underline{\Div_\calf T_r(A_\xi), \,Z_{\,\xi}}\> -\xi(\sigma_{r+1}(\xi)) -(r+2)\,\sigma_{r+2}(\xi)
 +\sigma_{1}(\xi)\,\sigma_{r+1}(\xi)\\
 && \ +\,\tr_{\,\calf}(T_r(A_\xi) {\mathcal R}^P_{\,\xi, \,\xi})
 +\sum\nolimits_{\,a=1}^p\<T_r(A_\xi)(\nabla_{e_a} \xi)^\top, \,\nabla_{\xi} \,e_a\>\big)\,{\rm d}\,\omega^\perp_L =0,
\end{eqnarray}
where the underlined term is given by \eqref{E-divNTk-1} with $X=Z_\xi$.
\end{theorem}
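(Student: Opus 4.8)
The plan is to integrate the pointwise divergence identity of Proposition~\ref{P-Cintfiv01} over a single compact leaf and invoke the Divergence Theorem. Fix a compact leaf $L$ of $\calf$ and form the vector field
\[
 W_x = \int_{\,({\mathcal N}_1\calf)_x}{\mathcal A}_\xi Z_{\,\xi}\,{\rm d}\,\omega^\bot_x,\qquad x\in L.
\]
Because $Z_{\,\xi}=(\nabla_\xi\,\xi)^\top\in T\calf$ and ${\mathcal A}_\xi:T\calf\to T\calf$, the integrand ${\mathcal A}_\xi Z_{\,\xi}$ lies in $T\calf$ for every unit normal $\xi$, so $W$ is tangent to the leaf $L$. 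This tangency is exactly what permits replacing the ambient $\calf$-divergence by the intrinsic geometry of $(L,g|_L)$.

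The bridge between the two is the observation that, for a vector field $W$ tangent to $L$, the $\calf$-divergence coincides with the Riemannian divergence on $(L,g|_L)$. Indeed, choosing a local orthonormal frame $\{e_i\}$ of $T\calf=TL$ and splitting $\nabla_{e_i}W=\nabla^L_{e_i}W+\nu_i$ into its tangent and normal parts (so $\nu_i\perp TL$), the pairing with $e_i\in TL$ kills $\nu_i$, whence $\Div_\calf W=\sum_i\langle\nabla_{e_i}W,\,e_i\rangle=\Div_L W$, the divergence with respect to the induced Levi-Civita connection. Proposition~\ref{P-Cintfiv01}, on the other hand, evaluates $\Div_\calf W$ as the fibrewise integral over $({\mathcal N}_1\calf)_x$ of precisely the expression that appears under the integral sign of \eqref{E-intCh-B}.

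Since a compact leaf is closed (boundaryless), the Divergence Theorem on $(L,g|_L)$ yields $\int_L\Div_L W\,{\rm d}\vol_L=0$. Substituting the formula of Proposition~\ref{P-Cintfiv01} for $\Div_L W=\Div_\calf W$ and then applying the Fubini formula of Remark~\ref{lem:Sas3} with $L$ in place of $M$ --- which turns $\int_L\big(\int_{\,({\mathcal N}_1\calf)_x}(\cdots)\,{\rm d}\,\omega^\bot_x\big)\,{\rm d}\vol_L$ into $\int_{{\mathcal N}_1\calf|_L}(\cdots)\,{\rm d}\,\omega^\perp_L$ --- gives \eqref{E-intCh-B}. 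Formula \eqref{E-intCh-C} is the specialization $f_j=(-1)^j\sigma_{r-j}$, for which ${\mathcal A}_\xi=T_r(A_\xi)$ by \eqref{E-intNT-C}; here one starts from the already simplified divergence \eqref{E-divFNTh} instead of the general \eqref{E-divCN-Z}.

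I expect the Divergence Theorem step to be routine and the real work to sit upstream, in justifying differentiation under the fibre integral and the explicit identification of $\Div_\calf W$ with the stated integrand --- this is the content of Proposition~\ref{P-Cintfiv01}, which in turn rests on Lemma~\ref{L-31-myC} and on the pointwise normalization $\nabla_X\,\xi\in T_x\calf$. One must also ensure that $\xi$ and the adapted frame $\{e_i,e_a\}$ can be chosen smoothly on a neighbourhood of each point so that $W$ is a genuine smooth section of $TL$; since every intermediate identity is tensorial, the pointwise equalities propagate to all of $L$ and the resulting formula is independent of the frame.
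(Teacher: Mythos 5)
Your proof is correct and is exactly the argument the paper intends: Theorem~\ref{T-mainLW} is presented there as an immediate consequence of Proposition~\ref{P-Cintfiv01} (resp.\ \eqref{E-divFNTh} for the case ${\mathcal A}_\xi=T_r(A_\xi)$) combined with the Divergence Theorem applied to a compact leaf with the induced metric and the Fubini formula of Remark~\ref{lem:Sas3}. Your explicit check that $W$ is tangent to $L$ and that $\Div_\calf W=\Div_L W$ via the Gauss formula merely fills in the routine step the paper leaves implicit.
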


For any vector field $X$ in the distribution $T\calf$, we have
\begin{equation}\label{E-HD}
 \Div X = \Div_\calf X -\<X, \,{H}^\bot\> -\<X, \,\widetilde{H}\>,
\end{equation}
where
${H}^\bot$
%=\sum_{\,a=1}^p \nabla_{e_a}\,e_a$
is the mean curvature vector of ${\mathcal N}\calf$,
% (hence, $P{H}^\bot$ is tangent to $\calf$)
 and
$\widetilde{H}$ is the mean curvature vector field of $\widetilde{\mathcal D}$.
% in $(M,g)$.
Recall that $\widetilde{\mathcal D}$ is a \textit{harmonic} distribution if $\widetilde{H}=0$.
%its mean curvature vector field vanishes.
There are topological restrictions for the existence of a Riemannian metric on closed manifold,
for which a given distribution is harmonic, see~\cite{su2}.
%Set ${H}_\calf=P(H)$, where
% in $(M,g)$.
%Applying the Divergence Theorem to $(M,g)$ or to any compact leaf with the induced metric, we~get the following two theorems.
 Using Proposition~\ref{P-Cintfiv01}, we get the following
 theorem with integral formulas along a closed foliated manifold, which generalizes \cite[Theorem~3.3]{r8}.

\begin{theorem}\label{T-mainLW-2}
Let $(M,{\mathcal D},\calf,g)$ be a foliated closed sub-Riemannian manifold with ${\mathcal D}=T\calf\oplus {\mathcal N}\calf$
and a harmonic distribution $\widetilde{\mathcal D}$.
Then the following integral formula is valid:
%On a closed Riemannian manifold $(M,g)$ equipped with a harmonic distribution ${\mathcal D}$ we~get
\begin{eqnarray}\label{E-intCh}
\nonumber
 && \int_{{\mathcal N}_1\calf} \Big(\<\underline{\Div_\calf{\mathcal{A}}_\xi, \,Z_\xi}\>
 +\sum\nolimits_{k<n}\big(f_k\,\tau_{k+2}(\xi) +\frac{\tau_{k+1}(\xi)}{k+1}\,(\xi(f_k)-f_k\tau_1(\xi))\big)\\
 &&\hskip4mm +\tr_{\,\calf}({\mathcal{A}}_\xi {\cal R}^P_{\xi,\xi})-\<{\mathcal{A}}_\xi Z_\xi, H^\perp\>
 +\sum\nolimits_{\,a=1}^p\<{{\mathcal{A}}_\xi}(\nabla_{e_a} \xi)^\top, \,\nabla_{\xi}\,e_a\>\Big) {\rm d}\,\omega^\perp = 0,
\end{eqnarray}
where
%${\rm d}\,\omega^\perp$ is the volume form on ${\mathcal N}_1\calf$ and
the underlined term is given by \eqref{E-divCN-Z} with $X=Z_\xi$.
For $f_j=(-1)^j\sigma_{r-j}$, \eqref{E-intCh} gives
% and (\ref{E-divNTN-ZZ}), resp.
\begin{eqnarray}\label{E-intNTNT}
\nonumber
 && \int_{{\mathcal N}_1\calf}\Big(\< \underline{\Div_\calf T_r(A_{\xi}), \,Z_{\,\xi}}\> -(r+2)\,\sigma_{r+2}({\xi}) -\<T_r(A_{\xi}) Z_{\,\xi}, {H}^\bot\>\\
 && +\tr_{\,\calf}(T_r(A_{\xi})\,{\mathcal R}^P_{\,{\xi}, \,\xi})+\sum\nolimits_{\,a=1}^p\<T_r(A_{\xi})(\nabla_{e_a}{\xi})^\top, \,\nabla_{{\xi}}\,e_a\>\Big)
 \,{\rm d}\,\omega^\perp = 0,
\end{eqnarray}
where the underlined term is given by \eqref{E-divNTk-1} with $X=Z_\xi$.
\end{theorem}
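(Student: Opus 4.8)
The plan is to apply the Divergence Theorem on the closed manifold $(M,g)$ to the vector field
\[
 \Xi=\int_{\,({\mathcal N}_1\calf)_x}{\mathcal A}_\xi Z_{\,\xi}\,{\rm d}\,\omega^\bot_x ,
\]
which is a section of $T\calf$ (since $Z_{\,\xi}\in T\calf$ and ${\mathcal A}_\xi:T\calf\to T\calf$) and whose $\calf$-divergence is already computed in Proposition~\ref{P-Cintfiv01}. Applying \eqref{E-HD} to $X=\Xi$ and using the hypothesis that $\widetilde{\mathcal D}$ is harmonic (so $\widetilde H=0$ and the term $\<\Xi,\widetilde H\>$ drops out), one gets $\Div\Xi=\Div_\calf\Xi-\<\Xi, H^\bot\>$. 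Integrating over $M$ and invoking the Divergence Theorem gives $\int_M(\Div_\calf\Xi-\<\Xi, H^\bot\>)\,{\rm d}\vol_g=0$. Substituting the expression for $\Div_\calf\Xi$ from Proposition~\ref{P-Cintfiv01}, writing $\<\Xi,H^\bot\>=\int_{\,({\mathcal N}_1\calf)_x}\<{\mathcal A}_\xi Z_{\,\xi},H^\bot\>\,{\rm d}\,\omega^\bot_x$, and applying the Fubini formula of Remark~\ref{lem:Sas3} collapses the iterated integral into a single integral over ${\mathcal N}_1\calf$. This already yields \eqref{E-intCh}, except that the target summand $+\frac{\tau_{k+1}(\xi)}{k+1}\big(\xi(f_k)-f_k\tau_1(\xi)\big)$ appears in the provisional form $-\frac{f_k}{k+1}\,\xi(\tau_{k+1}(\xi))$ inherited from Proposition~\ref{P-Cintfiv01}.

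The remaining work is to reconcile these two forms, and this is the technical core. I would isolate the following conversion identity: for any function $\Phi$ on ${\mathcal N}_1\calf$ with $M$ closed,
\[
 \int_{{\mathcal N}_1\calf}\xi(\Phi)\,{\rm d}\,\omega^\bot=\int_{{\mathcal N}_1\calf}\Phi\,\tau_1(\xi)\,{\rm d}\,\omega^\bot .
\]
To prove it I would apply the Divergence Theorem to the normal-valued vector field $\int_{\,({\mathcal N}_1\calf)_x}\Phi\,\xi\,{\rm d}\,\omega^\bot_x$ on $M$, use $\Div(\Phi\xi)=\xi(\Phi)+\Phi\,\Div\xi$, and evaluate $\Div\xi$ at a point $x$ with the extension $\nabla_X\,\xi\in T_x\calf$ (the same normalization already used in Proposition~\ref{P-Cintfiv01}). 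With this extension the ${\mathcal N}\calf$- and $\widetilde{\mathcal D}$-contributions to $\Div\xi$ vanish, since $\nabla_{e_a}\xi$ and $\nabla_\mu\,\xi$ are then tangent to $\calf$, so that $\Div\xi=\sum_i\<\nabla_{e_i}\xi,e_i\>=-\tr A_\xi=-\tau_1(\xi)$; the Fubini formula then gives the identity. Taking $\Phi=\frac{f_k\tau_{k+1}(\xi)}{k+1}$ and expanding $\xi(\Phi)$ by the product rule converts $-\frac{f_k}{k+1}\,\xi(\tau_{k+1}(\xi))$ into $+\frac{\tau_{k+1}(\xi)}{k+1}\big(\xi(f_k)-f_k\tau_1(\xi)\big)$ under the integral sign, completing the proof of \eqref{E-intCh}.

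For the Newton case \eqref{E-intNTNT} I would specialize $f_j=(-1)^j\sigma_{r-j}$, so that ${\mathcal A}_\xi=T_r(A_\xi)$ by \eqref{E-intNT-C}, and use the ``in particular'' part of Proposition~\ref{P-Cintfiv01} together with \eqref{E-divFNTh}; the main step then produces \eqref{E-intNTNT} up to the extra summand $-\xi(\sigma_{r+1}(\xi))+\sigma_1(\xi)\,\sigma_{r+1}(\xi)$. Applying the same conversion identity with $\Phi=\sigma_{r+1}(\xi)$, and using $\tau_1(\xi)=\sigma_1(\xi)$, shows that this summand integrates to zero over ${\mathcal N}_1\calf$, which leaves exactly the term $-(r+2)\,\sigma_{r+2}(\xi)$ of \eqref{E-intNTNT}.

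The main obstacle I anticipate is the rigorous justification of commuting the divergence operator with the fiber integration over $({\mathcal N}_1\calf)_x$, and of the pointwise extension $\nabla_X\,\xi\in T_x\calf$ used to evaluate $\Div\xi$; once the conversion identity is secured on this basis, everything else is a bookkeeping of the terms already furnished by Proposition~\ref{P-Cintfiv01} and Lemma~\ref{L-NTprop}. Care is also needed to confirm that harmonicity of $\widetilde{\mathcal D}$ enters only through the vanishing of $\<\Xi,\widetilde H\>$ in \eqref{E-HD} and is not silently reused in the conversion step, where the chosen extension makes the $\widetilde{\mathcal D}$-divergence of $\xi$ vanish on its own.
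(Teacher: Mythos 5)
Your proposal is correct and follows essentially the same route as the paper: the paper likewise applies \eqref{E-HD} with $\widetilde H=0$ and the Divergence Theorem to the fiber integral of ${\mathcal A}_\xi Z_\xi$, and disposes of the $\xi$-derivative terms via $\Div\xi=-\tau_1(\xi)$. The only (cosmetic) difference is that the paper does this in one stroke, computing $\Div\big(\frac{f_k}{k+1}\,\tau_{k+1}(\xi)\,\xi\big)$, resp.\ $\Div(\sigma_{r+1}(\xi)\,\xi)$, and adding these normal fields to the vector field before integrating once, whereas you apply the Divergence Theorem a second time to $\Phi\,\xi$ to obtain the conversion identity $\int_{{\mathcal N}_1\calf}\xi(\Phi)\,{\rm d}\,\omega^\bot=\int_{{\mathcal N}_1\calf}\Phi\,\tau_1(\xi)\,{\rm d}\,\omega^\bot$; the two computations are identical in content.

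One correction to your closing remark, though. The chosen extension cannot make $\nabla_\mu\,\xi$ tangent to $\calf$ for $\mu\in\widetilde{\mathcal D}$: the components of $\nabla_X\xi$ outside ${\mathcal N}\calf$ are tensorial in $\xi$ (two extensions of $\xi(x)$ as a section of ${\mathcal N}\calf$ differ, at $x$, by a term lying in ${\mathcal N}_x\calf$), and indeed $\<\nabla_{e_\mu}\xi,\,e_\mu\>=-\<\xi,\nabla_{e_\mu}e_\mu\>$ regardless of the extension. Summing over an orthonormal frame of $\widetilde{\mathcal D}$ gives $-\<\xi,\widetilde H\>$, so the $\widetilde{\mathcal D}$-contribution to $\Div\xi$ vanishes \emph{because} $\widetilde H=0$, not by normalization; only the ${\mathcal N}\calf$-component of $\nabla\xi$ can be gauged away, which yields the vanishing of the $\sum_a\<\nabla_{e_a}\xi,e_a\>$ term. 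Thus the harmonicity hypothesis \emph{is} silently reused in your conversion step --- exactly as in the paper, where the equality $\Div\xi=\Div_\calf\xi=-\tau_1(\xi)$ under the normalization $\nabla_X\xi\perp{\mathcal N}\calf$ also rests on $\widetilde H=0$. Since $\widetilde H=0$ is assumed in the theorem, this does not invalidate your proof; it only corrects your accounting of where the hypothesis enters.
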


\begin{proof}
 Using $\sum_{\,1\le a\le p} (\nabla_{e_a} e_a)^\top=P(H^\perp)$ and \eqref{E-HD} with $X(x)=\int_{\,({\mathcal N}_1\calf)_x}{\mathcal A}_{\xi} Z_\xi\,{\rm d}\,\omega_x^\perp$ and our assumption $\widetilde H=0$, we get
\begin{eqnarray}\label{E-divdiv}
\nonumber
  \Div\int_{({\mathcal N}_1\calf)_x}\!{{\mathcal A}_{\xi}} Z_\xi\,{\rm d}\,\omega_x^\perp
  \eq\Div_\calf\big(\int_{({\mathcal N}_1\calf)_x}\!{{\mathcal A}_{\xi}} Z_\xi\,{\rm d}\,\omega_x^\perp\big)
  +\sum\limits_{a=1}^p\int_{({\mathcal N}_1\calf)_x}\!\<\nabla_{e_a}({\mathcal A}_{\xi} Z_\xi), \,e_a\>\,{\rm d}\,\omega_x^\perp\\
  \eq\Div_\calf\big(\int_{\,({\mathcal N}_1\calf)_x}{\mathcal A}_{\xi} Z_\xi\,{\rm d}\,\omega_x^\perp\big)
  -\int_{\,({\mathcal N}_1\calf)_x}\<{\mathcal A}_{\xi} Z_\xi, \,H^\perp\>\,{\rm d}\,\omega_x^\perp.
\end{eqnarray}
Assuming $\nabla_X {\xi}\perp{\mathcal N}\calf$ for any $X\in T_x M$ and $\xi\in({\mathcal N}_1\calf)_x$ at some point $x\in M$, we get
\[
 \Div {\xi}=\Div_\calf {\xi}=-\tau_1({\xi}).
\]
Using this equality,
% and our assumption $\widetilde H=0$,
we also find
\begin{eqnarray*}
 &&\hskip-6mm\Div\big(\frac{f_k}{k+1}\,\tau_{k+1}({\xi})\,{\xi}\big)
 =\frac{f_k}{k+1}\,\Div\big(\tau_{k+1}({\xi})\,{\xi}\big)+\frac{1}{k+1}\,\tau_{k+1}({\xi})\,{\xi}(f_k)\\
 &&=\frac{f_k}{k+1}\big(\tau_{k+1}({\xi})\Div {\xi} +{\xi}(\tau_{k+1}({\xi}))\big) +\frac{1}{k+1}\,\tau_{k+1}({\xi})\,{\xi}(f_k)\\
 &&=\frac{f_k}{k+1}\,{\xi}(\tau_{k+1}({\xi}))+\frac{1}{k+1}\,\tau_{k+1}({\xi})({\xi}(f_k)-f_k\tau_1({\xi})).
\end{eqnarray*}
Then at the point $x\in M$ we obtain the equality for tensors
\begin{eqnarray*}
%\label{E-divCh}
%\nonumber
 &&\Div\int_{\,({\mathcal N}_1\calf)_x}\big({{\mathcal A}_{\xi}} Z_\xi
 +\sum\nolimits_{\,k<n}\frac{f_k\tau_{k+1}({\xi})}{k+1}\,\xi\big)\,{\rm d}\,\omega_x^\perp
 =\int_{\,({\mathcal N}_1\calf)_x}\Big(\underline{\<\Div_\calf{\mathcal A}_{\xi}, Z_\xi\>} \\
%\nonumber
 &&\hskip1mm
 +\sum\nolimits_{\,k<n}\big(f_k\tau_{k+2}({\xi})+\frac{1}{k+1}\,\tau_{k+1}({\xi})\,({\xi}(f_k)-f_k\tau_1({\xi}))\big)  \\
 &&\hskip1mm
 +\,\tr_{\,\calf}({\mathcal A}_{\xi}\,{\mathcal R}^P_{\,\xi, \,\xi})-\<{{\mathcal A}_{\xi}} Z_\xi, H^\perp\>
 +\sum\nolimits_{\,a=1}^p\<{{\mathcal A}_{\xi}}(\nabla_{e_a} {\xi})^\top, \,\nabla_{\xi}\,e_a\>\Big)\,{\rm d}\,\omega_x^\perp,
\end{eqnarray*}
see (\ref{E-divCN-Z}) for the underlined term.
Applying the Divergence Theorem yields~(\ref{E-intCh}).

\vskip1mm
For the Newton transformations of $A_\xi$, similarly to (\ref{E-divdiv}), we have
\begin{equation*}
%\label{E-divdivNT}
%\nonumber
  \Div\int_{\,({\mathcal N}_1\calf)_x} T_r(A_\xi) Z_\xi\,{\rm d}\,\omega_x^\perp
  =\Div_\calf\int_{\,({\mathcal N}_1\calf)_x} (T_r(A_\xi) Z_\xi -\< T_r(A_\xi) Z_\xi, H^\perp\>)\,{\rm d}\,\omega_x^\perp.
\end{equation*}
Observe that
\begin{equation*}
\Div(\sigma_{r+1}({\xi})\,{\xi})=-\sigma_{1}({\xi})\,\sigma_{r+1}({\xi})+{\xi}(\sigma_{r+1}({\xi}))\quad \mbox{\rm for all } \ {\xi}\in{\mathcal N}_1\calf.
\end{equation*}
 Then at a point $x\in M$ we obtain
\begin{eqnarray*}
\nonumber
 &&\hskip-5mm\Div\!\int_{\,({\mathcal N}_1\calf)_x}\big(T_r(A_\xi) Z_\xi +\sigma_{r+1}({\xi})\,{\xi}\big) {\rm d}\,\omega_x^\perp
 =\hskip-.5mm\int_{\,({\mathcal N}_1\calf)_x}\!\Big(\<\underline{\Div_\calf T_r(A_\xi), Z_\xi}\>-(r+2)\,\sigma_{r+2}({\xi})\\
 &&\hskip2mm -\,\<T_r(A_\xi) Z_\xi, H^\perp\> +\tr_{\,\calf}(T_r(A_\xi)\,{\mathcal R}^P_{\,\xi, \,\xi})
 +\sum\nolimits_{\,a=1}^p\<T_r(A_\xi)(\nabla_{e_a} {\xi})^\top, \nabla_{\xi}\,e_a\>\Big)\,{\rm d}\,\omega_x^\perp,
\end{eqnarray*}
see \eqref{E-divFNTh} for the underlined term.
Using the Divergence Theorem yields~(\ref{E-intNTNT}).
\end{proof}

\section{Applications of main results}
%\section{Totally geodesic and umbilical foliations}

Here, we apply results of Section~\ref{sec:main} to
foliations
%of sub-Riemannian manifolds
with
small $k$ or $r$, with restrictions on the curvature and extrinsic geometry and to codimension-one foliations.

Let $(M,{\mathcal D},\calf,g)$ be a foliated sub-Riemannian manifold with ${\mathcal D}=T\calf\oplus {\mathcal N}\calf$, then
% will be called

(i) ${\mathcal N}\calf$ will be called $P$-\textit{auto-parallel}, if $(\nabla_XY)^\top = 0$ for all $X,Y\in{\mathcal N}\calf$,
%$(\nabla_{e_a}\,e_b)^\top=0$ for all $a,b$,

(ii) $\calf$ will be called $P$-\textit{harmonic}, if
%$Z=0$ (or,
$\sigma_1(\xi)=0$ for all $\xi\in {\mathcal N}_1\calf$,

(iii) $\calf$ will be called $P$-\textit{totally umbilical}, if
\begin{equation}\label{E-P-umb}
 A_\xi=(\sigma_1(\xi)/n)\id_{\,T\calf},\quad \xi\in {\mathcal N}_1\calf.
\end{equation}
Obviously, $P$-{auto-parallel}, $P$-{harmonic} and $P$-{totally umbilical} distributions are
{auto-paral\-lel}, {harmonic} and {totally umbilical}, respectively, but the opposite is not true.

\smallskip
%\subsubsection{Initial members}

The~\textit{mixed scalar $P$-curvature} ${\rm S}^P_{\,\rm mix}$
-- the simplest curvature invariant of a foliated sub-Riemannian manifold --
is defined as an ave\-raged sum of sectional $P$-curvatures of mixed planes (i.e., 2-planes that non-trivially intersect with each of the distributions
$T\calf$ and ${\mathcal N}\calf$);
for an adapted orthonormal frame, we have
\begin{equation*}
%\label{eq-wal2}
 {\rm S}^P_{\,\rm mix}
 =\sum\nolimits_{\,a=1}^p\tr_{\,\calf}{\cal R}^P_{e_a,e_a}
 = \sum\nolimits_{\,a=1}^p\sum\nolimits_{\,i=1}^n \<R^P(e_i, e_a)e_a,\, e_i\>.
\end{equation*}
Let $h^\bot,T^\bot: {\mathcal N}\calf\times {\mathcal N}\calf\to ({\mathcal N}\calf)^\bot$ and $H^\bot=\tr_g h^\bot$
%$h,T:T\calf\times T\calf\to (T\calf)^\bot$ and $H=\tr_g h$
be the second fundamental form, the integrability tensor and the mean curvature vector field of
%the distribution
${\mathcal N}\calf$, and similarly for $T\calf$; in~our case of a foliation, we have $T=0$.
The following corollary generalizes \eqref{E-PW-int}.
%, see also
% and the result in
%\cite{Wa01} for Riemannian manifolds.
% see also \eqref{E-PW-int}  and
% \cite{r-subR0}.

\begin{corollary}\label{C-main00}
From \eqref{E-intCh} for ${\mathcal A}_\xi=\id_{\,T\calf}$ we get the following integral formula over $M$:
\begin{equation}\label{E-PW2-int}
 \int_M\big({\rm S\,}^P_{\rm mix} +\|P\circ h\|^2+\|P\circ h^\bot\|^2-\|P H\|^2-\|P H^\bot\|^2
 %-\|P\circ T\|^2
 -\|P\circ T^\bot\|^2\big)\,{\rm d}\vol_g=0.
\end{equation}
%which generalizes \eqref{E-sigma2} and the result in \cite{Wa01} for foliated Riemannian manifolds.
\end{corollary}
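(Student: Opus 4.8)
The plan is to substitute ${\mathcal A}_\xi=\id_{\,T\calf}$, i.e.\ $f_0\equiv 1$ and $f_k\equiv 0$ for $1\le k<n$, into the integral formula \eqref{E-intCh} and then carry out the fibrewise integration over the unit spheres $({\mathcal N}_1\calf)_x$. First I would note that, as observed after \eqref{eq:div1}, $\Div_\calf\id_{\,T\calf}=0$, so the underlined term in \eqref{E-intCh} vanishes. Since $f_0$ is constant, $\xi(f_0)=0$, and only the $k=0$ summand of $\sum_{k<n}(\cdots)$ survives, contributing $\tau_2(\xi)-\tau_1^2(\xi)$. Thus \eqref{E-intCh} collapses to
\begin{equation*}
 \int_{{\mathcal N}_1\calf}\Big(\tau_2(\xi)-\tau_1^2(\xi)+\tr_{\,\calf}{\mathcal R}^P_{\xi,\xi}-\<Z_\xi,\,H^\bot\>+\sum\nolimits_{a=1}^p\<(\nabla_{e_a}\xi)^\top,\,\nabla_\xi\,e_a\>\Big)\,{\rm d}\,\omega^\bot=0 .
\end{equation*}

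By the Fubini formula (Remark~\ref{lem:Sas3}) it suffices to integrate the bracketed expression over each fibre $({\mathcal N}_1\calf)_x\cong S^{p-1}$. Fixing an adapted orthonormal frame $\{e_i,e_a\}$ at $x$ and writing $\xi=\sum_a y_a e_a$ with $\sum_a y_a^2=1$, I would use that $A_\xi=\sum_a y_a A_{e_a}$ is linear in $\xi$ (from $\<A_\xi X,Y\>=\<h(X,Y),\xi\>$); hence every term above is a homogeneous quadratic polynomial in $y$. By the symmetry of the sphere (a special case of Remark~\ref{R-I00}) one has $\int_{S^{p-1}}y_a y_b\,{\rm d}\,\omega_{p-1}=\frac1p\,\delta_{ab}\,\vol(S^{p-1})$, so each fibre integral equals $\frac1p\vol(S^{p-1})$ times the corresponding $a$-diagonal sum.

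The heart of the argument is then to identify these diagonal sums with the invariants in \eqref{E-PW2-int}. Using $\<A_{e_a}e_i,e_j\>=\<h(e_i,e_j),e_a\>$ gives $\sum_a\tr(A_{e_a}^2)=\|P\circ h\|^2$ and $\sum_a(\tr A_{e_a})^2=\|P H\|^2$, while $\sum_a\tr_{\,\calf}{\mathcal R}^P_{e_a,e_a}={\rm S}^P_{\rm mix}$ directly from the definition of the mixed scalar $P$-curvature. For the remaining two terms I would extend $\xi$ with constant coefficients $y_a$ in a frame satisfying the conditions of Lemma~\ref{L-31-myC}, so that $\nabla_{e_a}\xi=\sum_b y_b\nabla_{e_a}e_b$; then $Z_\xi=\sum_{a,b}y_a y_b(\nabla_{e_a}e_b)^\top$ and, since $P H^\bot=\sum_a(\nabla_{e_a}e_a)^\top$, the average of $\<Z_\xi,H^\bot\>$ is $\frac1p\vol(S^{p-1})\,\|P H^\bot\|^2$. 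Finally, the mixed term averages to $\frac1p\vol(S^{p-1})\sum_{a,b}\<(\nabla_{e_a}e_b)^\top,(\nabla_{e_b}e_a)^\top\>$; decomposing $(\nabla_{e_a}e_b)^\top$ into its parts symmetric and antisymmetric in $(a,b)$ --- which are precisely $(P\circ h^\bot)(e_a,e_b)$ and $(P\circ T^\bot)(e_a,e_b)$ --- turns this into $\|P\circ h^\bot\|^2-\|P\circ T^\bot\|^2$.

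Collecting the five contributions, applying Remark~\ref{lem:Sas3} and dividing by the common factor $\frac1p\vol(S^{p-1})$ yields \eqref{E-PW2-int}; the integrability of $T\calf$ (so $T=0$) accounts for the absence of a $\|P\circ T\|^2$ term. I expect the main obstacle to be the last identification: the symmetric/antisymmetric split of $(\nabla_{e_a}e_b)^\top$, together with careful bookkeeping of the projection $P$ onto ${\mathcal D}$, is what produces the second fundamental form and integrability tensor of ${\mathcal N}\calf$ with their correct signs, whereas the curvature and shape-operator terms follow immediately from the definitions.
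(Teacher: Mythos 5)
Your proposal is correct and takes essentially the same route as the paper's proof: substitute ${\mathcal A}_\xi=\id_{\,T\calf}$ into \eqref{E-intCh} (the divergence term vanishes and only the $k=0$ summand survives, giving $\tau_2(\xi)-\tau_1^2(\xi)$), then integrate the resulting $2$-homogeneous expression in $\xi$ over each fibre $({\mathcal N}_1\calf)_x$ using Remark~\ref{R-I00} and identify the diagonal sums with $\|P\circ h\|^2-\|PH\|^2$, ${\rm S}^P_{\rm mix}$, $\|PH^\perp\|^2$ and $\|P\circ h^\perp\|^2-\|P\circ T^\perp\|^2$. The only difference is that you spell out the symmetric/antisymmetric splitting of $(\nabla_{e_a}e_b)^\top$ behind the last identification, which the paper states without detail.
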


\begin{proof}
%We shall look at first members of series (\ref{E-intCh}).
From (\ref{E-intCh}) with ${\mathcal A}_\xi=\id_{\,T\calf}$, we get
\begin{equation}\label{E-intNTNT-k0}
%\label{E-intCh-0}
 \int_{{\mathcal N}_1\calf} \Big(
 %2\,\sigma_2(\xi)
 \tau^2_{1}(\xi)-\tau_{2}(\xi)
 -\tr_{\,\calf}{\cal R}^P_{\xi,\xi}+\<Z_\xi, H^\perp\> -\sum\nolimits_{\,a=1}^p\<(\nabla_{e_a} \xi)^\top, \,\nabla_{\xi}\,e_a\>\Big) {\rm d}\,\omega^\perp = 0.
\end{equation}
Note that $\tau^2_{1}(\xi)-\tau_{2}(\xi) = 2\,\sigma_2(\xi)$.
The same \eqref{E-intNTNT-k0} we get from (\ref{E-intNTNT}) with $r=0$.

%\begin{example}\rm
Let $\xi=\sum_{\,a=1}^p y_a e_a$, where $y_a\in\RR$, be any unit vector field in ${\mathcal N}\calf$.
For a 2-homogeneous on $\xi$ function
%$f$,
$f(\xi,\xi)=\sum\nolimits_{\,a,b=1}^pf(e_a,e_b)\,y_a y_b$,
as is integrand of (\ref{E-intNTNT-k0}), we have
\[
 \int_{({\mathcal N}_1\calf)_x} f(\xi,\xi)\,{\rm d}\,\omega_x^\perp =\tilde I_2\sum\nolimits_{\,a=1}^p f(e_a,e_a), \
 {\rm where}\ \tilde I_2 =\int_{({\mathcal N}_1\calf)_x} y_a^2\,{\rm d}\,\omega_x^\perp =2\pi^{\frac{{p}-1}2}\frac{\Gamma(3/2)}{\Gamma({p}/2+1)},
\]
see Remark~\ref{R-I00}. Applying this to the terms of (\ref{E-intNTNT-k0}), we find
\begin{eqnarray*}
 && \int_{({\mathcal N}_1\calf)_x}\big(\tau_2(\xi)-\tau_1^2(\xi)\big)\,{\rm d}\,\omega_x^\perp=\tilde I_2(\|P\circ h\|^2-\|P H\|^2),
 %-\|T\|^2),
 \\
&& \int_{({\mathcal N}_1\calf)_x}\<Z_\xi, H^\perp\>\,{\rm d}\,\omega_x^\perp=\tilde I_2\|P H^\perp\|^2,\\
&& \int_{({\mathcal N}_1\calf)_x}\tr_{\,\calf}{\cal R}^P_{\xi,\xi}\,{\rm d}\,\omega_x^\perp=\tilde I_2\sum\nolimits_{\,a=1}^p\tr_{\,\calf}{\cal R}^P_{e_a,e_a}
 =\tilde I_2\,{\rm S\,}^P_{\rm mix},\\
&& \int_{({\mathcal N}_1\calf)_x}\sum\nolimits_{\,a=1}^p\<(\nabla_{e_a} \xi)^\top,\,\nabla_{\xi}\,e_a\>\,{\rm d}\,\omega_x^\perp
 =\tilde I_2\,(\|P\circ h^\perp\|^2-\|P\circ T^\perp\|^2).
\end{eqnarray*}
Thus, \eqref{E-intNTNT-k0} reduces to follows \eqref{E-PW2-int}.
\end{proof}

%\textbf{6.1}.
\begin{example}\rm
We shall look at first members of series (\ref{E-intCh}).
%For ${\mathcal A}_\xi=\id_{\,T\calf}$, we get
%\begin{equation}\label{E-intNTNT-k0}
%%\label{E-intCh-0}
% \int_{{\mathcal N}_1\calf} \Big(
% %2\,\sigma_2(\xi)
%\tau^2_{1}(\xi)-\tau_{2}(\xi)
% -\tr_{\,\calf}{\cal R}^P_{\xi,\xi}+\<Z_\xi, H^\perp\> -\sum\nolimits_{\,a=1}^p\<(\nabla_{e_a} \xi)^\top, \,\nabla_{\xi}\,e_a\>\Big) {\rm d}\,\omega^\perp = %0.
%\end{equation}
%Note that $\tau^2_{1}(\xi)-\tau_{2}(\xi) = 2\,\sigma_2(\xi)$.
For ${\mathcal A}_\xi=A^2_{\,\xi}$, we get
\begin{eqnarray}\label{E-intCh2}
\nonumber
 && \int_{{\mathcal N}_1\calf} \Big(\<\underline{\Div_\calf A^2_{\,\xi}, \,Z_\xi}\> +\tau_{4}(\xi) -\frac13\,\tau_{3}(\xi)\,\tau_1(\xi)
 +\tr_{\,\calf}(A^2_{\,\xi} {\cal R}^P_{\xi,\xi})\\
 && -\,\<A^2_{\,\xi} Z_\xi, H^\perp\>
 +\sum\nolimits_{\,a=1}^p\<A^2_{\,\xi}(\nabla_{e_a} \xi)^\top, \,\nabla_{\xi}\,e_a\>\Big) {\rm d}\,\omega^\perp = 0.
\end{eqnarray}
Next, we look at first members of
%series of
(\ref{E-intNTNT}).
%For $r=0$, we get \eqref{E-intNTNT-k0}.
%\begin{equation}
%\label{E-intNTNT-k0}
% \int_{{\mathcal N}_1\calf}\big(2\,\sigma_{2}({\xi})-\tr_{\,\calf}{\mathcal R}^P_{\,{\xi}, \,\xi}+\<Z_{\,\xi}, {H}^\bot\>
%  -\sum\nolimits_{\,a=1}^p\<\nabla_{e_a} \xi^\top, \nabla_{{\xi}}\,e_a\>\big)\,{\rm d}\,\omega^\perp = 0.
%\end{equation}
%By identity $2\,\sigma_{2}(A_{\xi})=\tau^2_{1}({\xi})-\tau_{2}({\xi})$, (\ref{E-intNTNT-k0}) coincides with
%(\ref{E-inttauk0C}), which is (\ref{eq-wal2}).
 For $r=2$, (\ref{E-intNTNT}) gives
% a  similar to (\ref{E-intCh1}) integral formula
\begin{eqnarray}\label{E-intNTNT-k2}
\nonumber
 &&\int_{{\mathcal N}_1\calf}\Big(4\,\sigma_{4}({\xi})+\<T_2(A_{\xi})Z_{\,\xi},{H}^\bot\>
 -\tr_{\,\calf}\big(T_2(A_{\xi}){\mathcal R}^P_{\,{\xi}, \,\xi} + T_1(A_{\xi}){\mathcal R}^P_{\,Z_{\,\xi}, \,\xi}
 -{\mathcal R}^P_{\,A_{\xi}Z_{\,\xi},\,\xi}\big) \\
 &&\quad -\sum\nolimits_{\,a=1}^p\<T_2(A_{\xi})(\nabla_{e_a} {\xi})^\top,\,\nabla_{\xi}\,e_a\>\Big)\,{\rm d}\,\omega^\perp = 0.
\end{eqnarray}
If the distribution ${\mathcal N}\calf$ is a $P$-auto-parallel,
% (i.e., $Z_\xi=0$),
 then (\ref{E-intCh2}) and (\ref{E-intNTNT-k2})
 shorten to the formulas
\begin{eqnarray}\label{E-intCh2-geodesic}
 \int_{{\mathcal N}_1\calf} \Big(\tau_{4}(\xi) -\frac13\,\tau_{3}(\xi)\,\tau_1(\xi) +\tr_{\,\calf}(A^2_{\,\xi}\,{\cal R}^P_{\xi,\xi})\Big)
 \,{\rm d}\,\omega^\perp = 0,\\
%\end{eqnarray}
%If the distribution ${\mathcal N}\calf$ is a $P$-auto-parallel,
% (i.e., $Z_\xi=0$),
% then (\ref{E-intNTNT-k2}) shortens to the formula
%\begin{equation*}
\label{E-intNTtotG2}
 \int_{{\mathcal N}_1\calf}\Big(\sigma_{4}({\xi})-\frac14\,\tr_{\,\calf}(T_2(A_{\xi})\,{\mathcal R}^P_{\,\xi, \,\xi})\Big)\,{\rm d}\,\omega^\bot=0.
\end{eqnarray}
%As far as (\ref{E-intNTNT-k2}), so its simple reduction (\ref{E-intNTtotG2}), are new results.
Similarly to \eqref{E-intNTNT-k0}, one can transform \eqref{E-intCh2}--\eqref{E-intNTtotG2} to the integrals over $M$.
\end{example}

\begin{corollary}\label{C-main01}
Let ${\mathcal N}\calf$ be $P$-auto-parallel, $\widetilde H=0$ and ${\rm S\,}^P_{\rm mix}\ge0$. Then $\calf$ has no compact $P$-harmonic leaves.
\end{corollary}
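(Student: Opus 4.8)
The plan is to argue by contradiction: suppose $L$ is a compact leaf of $\calf$ on which $\sigma_1(\xi)=0$ for every $\xi\in{\mathcal N}_1\calf$, and apply the compact-leaf formula \eqref{E-intCh-C} of Theorem~\ref{T-mainLW} with $r=0$ (equivalently \eqref{E-intCh} with ${\mathcal A}_\xi=\id_{\,T\calf}$). First I would exploit the $P$-auto-parallelism of ${\mathcal N}\calf$ to kill all the extrinsic terms: since $\xi,e_a\in{\mathcal N}\calf$, the relation $(\nabla_XY)^\top=0$ gives $Z_{\,\xi}=(\nabla_{\xi}\xi)^\top=0$ and $(\nabla_{e_a}\xi)^\top=0$, so the underlined term $\<\Div_\calf T_0(A_\xi),Z_{\,\xi}\>$ (already zero because $\Div_\calf T_0(A_\xi)=0$) and the sum $\sum_{a}\<T_0(A_\xi)(\nabla_{e_a}\xi)^\top,\nabla_{\xi}e_a\>$ both drop out.

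Next I would use $P$-harmonicity. With $\sigma_1(\xi)\equiv0$ the quadratic term $\sigma_1(\xi)\,\sigma_{r+1}(\xi)$ and the normal-derivative term $\xi(\sigma_1(\xi))$ both vanish, while the identity $2\,\sigma_2(\xi)=\tau_1^2(\xi)-\tau_2(\xi)$ together with $\tau_1(\xi)=\sigma_1(\xi)=0$ yields $-(r+2)\,\sigma_{r+2}(\xi)=-2\,\sigma_2(\xi)=\tau_2(\xi)=\|A_\xi\|^2$. Hence \eqref{E-intCh-C} collapses to
\begin{equation*}
 \int_{{\mathcal N}_1\calf|_{L}}\big(\|A_\xi\|^2+\tr_{\,\calf}{\mathcal R}^P_{\,\xi,\xi}\big)\,{\rm d}\,\omega^\perp_L=0.
\end{equation*}
Then I would carry out the fibre integration exactly as in Remark~\ref{R-I00} and the proof of Corollary~\ref{C-main00}: writing $\xi=\sum_a y_a e_a$ and integrating the $2$-homogeneous integrand over each unit sphere $({\mathcal N}_1\calf)_x$ replaces $\|A_\xi\|^2=\tau_2(\xi)$ by $\tilde I_2\sum_a\|A_{e_a}\|^2$ and $\tr_{\,\calf}{\mathcal R}^P_{\,\xi,\xi}$ by $\tilde I_2\,{\rm S\,}^P_{\rm mix}$, where $\tilde I_2>0$. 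Applying the Fubini formula of Remark~\ref{lem:Sas3} over $L$ gives
\begin{equation*}
 \tilde I_2\int_{L}\Big(\sum\nolimits_{a=1}^{p}\|A_{e_a}\|^2+{\rm S\,}^P_{\rm mix}\Big)\,{\rm d}\vol_L=0.
\end{equation*}
Since ${\rm S\,}^P_{\rm mix}\ge0$ and each $\|A_{e_a}\|^2\ge0$, the integrand is non-negative, so it vanishes identically on $L$; in particular $A_\xi\equiv0$, i.e.\ $L$ is totally geodesic (and ${\rm S\,}^P_{\rm mix}|_L=0$), contradicting the presence of a non-totally-geodesic $P$-harmonic leaf.

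The hard part will be the sign bookkeeping that makes both surviving terms simultaneously non-negative: one must correctly reduce $-2\,\sigma_2$ to $+\|A_\xi\|^2$ under $\sigma_1=0$, and verify that the fibre average of $\tr_{\,\calf}{\mathcal R}^P_{\,\xi,\xi}$ is precisely $\tilde I_2\,{\rm S\,}^P_{\rm mix}$ with the \emph{same} sign, so that ${\rm S\,}^P_{\rm mix}\ge0$ is the decisive hypothesis. A secondary point is justifying the vanishing of the normal-derivative term $\xi(\sigma_1(\xi))$, which is immediate once $P$-harmonicity is read as $\sigma_1\equiv0$ on all of ${\mathcal N}_1\calf$; the remaining hypotheses $\widetilde H=0$ and the $P$-auto-parallelism of ${\mathcal N}\calf$ are exactly what allow one, if preferred, to run the identical reduction on the closed-manifold formula \eqref{E-intNTNT}--\eqref{E-intNTNT-k0} (there $\<T_r(A_\xi)Z_{\,\xi},H^\perp\>=0$ since $Z_{\,\xi}=0$), obtaining $\int_M\big(\sum_a\|A_{e_a}\|^2+{\rm S\,}^P_{\rm mix}\big)\,{\rm d}\vol_g=0$ and hence the same conclusion.
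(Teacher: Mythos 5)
Your proposal is correct and is essentially the paper's own argument: the paper likewise specializes the compact-leaf formula \eqref{E-intCh-B} to ${\mathcal A}_\xi=\id_{\,T\calf}$ (equivalently \eqref{E-intCh-C} with $r=0$), reduces it to $\int_{{\mathcal N}_1\calf|_{L}}\big(\tau_{2}(\xi)-\xi(\tau_{1}(\xi))+\tr_{\,\calf}{\mathcal R}^P_{\xi,\xi}+\sum_{a}\<(\nabla_{e_a}\xi)^\top,\nabla_{\xi}\,e_a\>\big)\,{\rm d}\,\omega^\perp_L=0$, and then relies on exactly your reductions ($P$-auto-parallelism killing the extrinsic terms and $Z_\xi$, global $P$-harmonicity killing $\xi(\tau_1(\xi))$, the sign identity $\tau_2(\xi)=\|A_\xi\|^2\ge0$, and the fibre integration of Remark~\ref{R-I00} turning $\tr_{\,\calf}{\mathcal R}^P_{\xi,\xi}$ into $\tilde I_2\,{\rm S\,}^P_{\rm mix}$ with $\tilde I_2>0$); your write-up is in fact more explicit than the paper's, which stops at the displayed identity with ``Thus, the claim follows.'' Two small points: your parenthetical cross-reference should be to the compact-leaf formula \eqref{E-intCh-B}, not to the closed-manifold formula \eqref{E-intCh}, and, as you correctly sensed, $\widetilde H=0$ is not actually used on the compact-leaf route (Theorem~\ref{T-mainLW} does not assume it) --- it matters only for your alternative run through \eqref{E-intNTNT}. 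Finally, the hedge in your closing contradiction clause is a real issue inherited from the paper itself: with only ${\rm S\,}^P_{\rm mix}\ge0$ the formula forces $A_\xi\equiv0$ and ${\rm S\,}^P_{\rm mix}=0$ along $L$, which is no contradiction when the leaf is totally geodesic (such a leaf is still $P$-harmonic), so the argument as it stands --- yours and the paper's alike --- literally excludes only non-totally-geodesic compact $P$-harmonic leaves, and the statement as worded needs strict positivity ${\rm S\,}^P_{\rm mix}>0$, as in Corollary~\ref{C-main02}, to rule out compact leaves altogether.
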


\begin{proof}
We simplify (\ref{E-intCh-B}) for ${\mathcal A}_\xi=\id_{\,T\calf}$ (the same gives \eqref{E-intCh-C} for $r=0$) as
%and ${\mathcal A}_\xi=A^2_{\,\xi}$, we get
\begin{eqnarray*}
%\label{E-intCh-B}
 &&\int_{{\mathcal N}_1\calf|_{L}}\Big(\tau_{2}({\xi}) -{\xi}(\tau_{1}({\xi}))
 +\tr_{\,\calf}{\mathcal R}^P_{{\xi},\xi} +\sum\nolimits_{\,a=1}^p\<(\nabla_{e_a} \xi)^\top, \,\nabla_{\xi}\,e_a\>\Big)\,{\rm d}\,\omega^\perp_L =0.
\end{eqnarray*}
%and
%\begin{equation*}
%%\label{E-PW2-int}
% \int_L\big({\rm S\,}^P_{\rm mix} +\|P\circ h\|^2+\|P\circ h^\bot\|^2-\|P H\|^2-\|P H^\bot\|^2
% %-\|P\circ T\|^2
% -\|P\circ T^\bot\|^2\big)\,{\rm d}\vol_{g|L}=0.
%\end{equation*}
Thus, the claim follows.
\end{proof}

\smallskip

%\textbf{6.2}.
Now, we apply results of Section~\ref{sec:main} to foliated sub-Riemannian manifolds with restrictions on the
extrinsic geometry of $\calf$.
%and complete \cite[Corollaries 1, 2]{r-subR1} on codimension-one foliated sub-Riemannian manifolds.
%Note \cite{rov-m} that
%A totally geodesic distribution ${\mathcal N}\calf$
%(i.e., any geodesic of $M$ that is tangent to ${\mathcal D}$ at one point is tangent to ${\mathcal D}$ at all its points)
%is characterized by the property
%\[
% Z_{\,\xi}=0,\quad {\xi}\in {\mathcal N}\calf.
%\]
By \eqref{E-PW2-int}, we easily obtain the following.

\begin{corollary}\label{C-main02}
A closed sub-Riemannian manifold $(M,{\mathcal D},g)$ with
%${\mathcal D}=T\calf\oplus {\mathcal N}\calf$ and
 a harmonic distribution $\widetilde{\mathcal D}$ does not admit

\noindent\ \
{\rm (i)} $P$-harmonic foliations $\calf$
%in ${\mathcal D}$
with $P$-{auto-parallel} ${\mathcal N}\calf$ and condition ${\rm S\,}^P_{\rm mix}>0$.

\noindent\ \
{\rm (ii)} $P$-totally umbilical foliations $\calf$
%in ${\mathcal D}$
with $P$-{auto-parallel} ${\mathcal N}\calf$ and condition ${\rm S\,}^P_{\rm mix}<0$.
\end{corollary}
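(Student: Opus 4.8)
The plan is to derive both parts of Corollary~\ref{C-main02} from the single integral formula \eqref{E-PW2-int} by exploiting the signs of its constituent terms. First I would recall that under the hypotheses, the base identity reads
\[
 \int_M\big({\rm S\,}^P_{\rm mix} +\|P\circ h\|^2+\|P\circ h^\bot\|^2-\|P H\|^2-\|P H^\bot\|^2-\|P\circ T^\bot\|^2\big)\,{\rm d}\vol_g=0,
\]
which applies since $\widetilde{\mathcal D}$ is harmonic ($\widetilde H=0$) and $M$ is closed. The strategy is a proof by contradiction in each case: assume such a foliation $\calf$ exists, substitute the geometric restrictions to kill or sign-pin the various terms, and observe that the remaining integrand is strictly positive (respectively negative) on a set of positive measure, contradicting the vanishing of the integral.

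For part~(i), I would use that $P$-auto-parallelism of ${\mathcal N}\calf$ means $(\nabla_XY)^\top=0$ for $X,Y\in{\mathcal N}\calf$, which forces the ${\mathcal N}\calf$-side extrinsic terms $\|P\circ h^\bot\|^2$ and $\|P\circ T^\bot\|^2$ to vanish (the $T\calf$-component of the second fundamental form and integrability tensor of ${\mathcal N}\calf$ is exactly what these measure), and likewise $\|PH^\bot\|^2=0$. The $P$-harmonic assumption gives $\sigma_1(\xi)=\tau_1(\xi)=0$ for all $\xi$, so the mean curvature term $\|PH\|^2$ of $\calf$ vanishes as well. The identity then collapses to
\[
 \int_M\big({\rm S\,}^P_{\rm mix}+\|P\circ h\|^2\big)\,{\rm d}\vol_g=0,
\]
and since both summands are nonnegative with ${\rm S\,}^P_{\rm mix}>0$ strictly, the integral is strictly positive, a contradiction.

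For part~(ii), the same auto-parallel reasoning eliminates the three ${\mathcal N}\calf$-terms, while the $P$-totally umbilical condition \eqref{E-P-umb} $A_\xi=(\sigma_1(\xi)/n)\id_{T\calf}$ lets me compute the remaining $T\calf$-terms explicitly: one finds $\|P\circ h\|^2-\|PH\|^2$ (after integrating over the normal sphere) equals a nonpositive multiple of the umbilicity factor, since for an umbilical operator $\tau_1^2=n\,\tau_2$ gives $\|P\circ h\|^2=\frac1n\|PH\|^2\le\|PH\|^2$. Thus the surviving integrand is ${\rm S\,}^P_{\rm mix}-(\text{nonnegative quantity})$, which is strictly negative when ${\rm S\,}^P_{\rm mix}<0$, again contradicting vanishing of the integral. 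The main obstacle I anticipate is verifying cleanly that $P$-auto-parallelism of ${\mathcal N}\calf$ really annihilates precisely $\|P\circ h^\bot\|^2$, $\|PH^\bot\|^2$, and $\|P\circ T^\bot\|^2$ (and not more or less), and tracking the constant factors in the umbilical case so that the sign of the leftover term is unambiguous; once the bookkeeping of which norm-squared terms survive is pinned down, the sign argument itself is routine.
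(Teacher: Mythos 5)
Your proof is correct and follows essentially the same route as the paper, whose entire stated proof is ``By \eqref{E-PW2-int}, we easily obtain the following'': the intended argument is precisely your sign analysis, in which $P$-auto-parallelism of ${\mathcal N}\calf$ annihilates $\|P\circ h^\bot\|^2$, $\|PH^\bot\|^2$ and $\|P\circ T^\bot\|^2$, $P$-harmonicity kills $\|PH\|^2$ in (i), and umbilicity gives the pointwise identity $\|P\circ h\|^2=\frac1n\|PH\|^2$ in (ii), so the surviving integrand has a fixed strict sign, contradicting \eqref{E-PW2-int}. One cosmetic remark: no further integration over the normal sphere is needed for the umbilical computation, since \eqref{E-PW2-int} is already an integral over $M$ of pointwise frame sums.
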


%\begin{proof} \end{proof}

From Theorem~\ref{T-mainLW-2} we obtain the following.

\begin{corollary}
%\label{C-main03}
Let $(M,{\mathcal D},g)$ be a closed sub-Riemannian manifold with ${\mathcal D}=T\calf\oplus {\mathcal N}\calf$,
a harmonic distribution $\widetilde{\mathcal D}$ and a $P$-{auto-parallel} distribution ${\mathcal N}\calf$.
%and $Z_\xi=0$ for all $\xi\in {\mathcal N}_1\calf$,
Then the following integral formulas are valid:
%(by Theorem \ref{T-main01C}) for any $k,r\ge0$ we have
%\begin{eqnarray}\label{E-intChR}
%\nonumber
% && \int_{S^\perp}\big(\tau_{k+2}({\xi})-\frac{\tau_{k+1}({\xi})}{k+1}\tau_1({\xi})
% +\tr_{\,\calf}(A^k_{\xi} R_{{\xi}})\\
% && -\sum\limits_{a\le {p}}\<A^k_{\xi}((\nabla_{e_a} {\xi})^\top), \nabla_{e_a} {\xi}\>\big) {\rm d}\,\omega=0.
%\end{eqnarray}
%Moreover, if $\widetilde{\mathcal D}$ is integrable, i.e., is tangent to a totally geodesic foliation, then
%\begin{eqnarray}\label{E-intCh}
% \int_{S^\perp} \sum\nolimits_{k<n}\Big(f_k\,\tau_{k+2}(\xi) +\frac{1}{k+1}\,\tau_{k+1}(\xi)(\xi(f_k)-f_k\tau_1(\xi))\Big){\rm d}\,\omega^\perp = 0,
%\end{eqnarray}
\begin{eqnarray}\label{E-intChR-int}
 &&\int_{\,{\mathcal N}_1\calf}\Big(\tau_{k+2}({\xi})-\frac{1}{k+1}\,\tau_{k+1}({\xi})\,\tau_1({\xi})+\tr_{\,\calf}(A^k_{\xi}\,{\mathcal R}^P_{\,\xi, \,\xi})\Big){\rm d}\,\omega^\bot=0,\\
\label{E-intChR-int2}
 &&\int_{\,{\mathcal N}_1\calf}\Big( (r+2)\,\sigma_{r+2}({\xi})-\tr_{\,\calf}(T_r(A_{\xi})\,{\mathcal R}^P_{\,\xi, \,\xi})\Big) {\rm d}\,\omega^\bot=0.
\end{eqnarray}
\end{corollary}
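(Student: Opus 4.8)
The plan is to specialize Theorem~\ref{T-mainLW-2} under the two standing hypotheses, the key point being that $P$-auto-parallelism of ${\mathcal N}\calf$ annihilates almost every term in the general formulas \eqref{E-intCh} and \eqref{E-intNTNT}. First I would record the two consequences of the hypothesis. Since every $\xi\in{\mathcal N}_1\calf$ and every adapted frame vector $e_a$ lies in ${\mathcal N}\calf$, the defining condition $(\nabla_XY)^\top=0$ for $X,Y\in{\mathcal N}\calf$ yields at once
\[
 Z_\xi=(\nabla_\xi\,\xi)^\top=0,\qquad (\nabla_{e_a}\,\xi)^\top=0\quad(1\le a\le p).
\]
These are exactly the identities needed, because in \eqref{E-intCh} and \eqref{E-intNTNT} every summand other than the $\tau_\bullet$, $\sigma_\bullet$, and $\tr_{\,\calf}(\,\cdot\,{\mathcal R}^P_{\xi,\xi})$ contributions carries a factor $Z_\xi$ or a factor $(\nabla_{e_a}\,\xi)^\top$.

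For \eqref{E-intChR-int2} I would invoke \eqref{E-intNTNT} directly, its harmonicity hypothesis $\widetilde H=0$ being granted. The underlined term $\<\Div_\calf T_r(A_\xi),\,Z_\xi\>$ vanishes because $Z_\xi=0$, as do $\<T_r(A_\xi)Z_\xi,\,H^\bot\>$ and the sum $\sum_a\<T_r(A_\xi)(\nabla_{e_a}\,\xi)^\top,\,\nabla_\xi\,e_a\>$. What remains is the identity $\int_{{\mathcal N}_1\calf}(-(r+2)\,\sigma_{r+2}(\xi)+\tr_{\,\calf}(T_r(A_\xi){\mathcal R}^P_{\xi,\xi}))\,{\rm d}\,\omega^\bot=0$, which is \eqref{E-intChR-int2} after multiplying by $-1$.

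For \eqref{E-intChR-int} I would apply \eqref{E-intCh} with the constant choice $f_j=\delta_{jk}$ for a fixed $k$ with $0\le k<n$, so that ${\mathcal A}_\xi=A^k_\xi$ and, crucially, $\xi(f_k)=0$ since $f_k\equiv1$. Once more $Z_\xi=0$ kills the underlined term together with $\<{\mathcal A}_\xi Z_\xi,\,H^\bot\>$, while $(\nabla_{e_a}\,\xi)^\top=0$ kills the final sum; the surviving scalar part then reduces to $\tau_{k+2}(\xi)-\frac{1}{k+1}\,\tau_{k+1}(\xi)\,\tau_1(\xi)$, giving \eqref{E-intChR-int}. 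There is no genuine obstacle here: the only point requiring care is the verification that both $\xi$ and each $e_a$ belong to ${\mathcal N}\calf$, so that $P$-auto-parallelism applies simultaneously to $Z_\xi$ and to $(\nabla_{e_a}\,\xi)^\top$; this is immediate from the adapted-frame convention $e_a\in{\mathcal N}\calf$ together with ${\mathcal N}_1\calf\subset{\mathcal N}\calf$.
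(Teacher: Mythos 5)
Your proposal is correct and follows exactly the route the paper intends: the corollary is stated as an immediate specialization of Theorem~\ref{T-mainLW-2}, and your observations that $P$-auto-parallelism of ${\mathcal N}\calf$ forces $Z_\xi=0$ and $(\nabla_{e_a}\,\xi)^\top=0$ (killing the underlined terms, the $H^\bot$-terms and the $\sum_a$-terms in \eqref{E-intCh} and \eqref{E-intNTNT}) are precisely the intended reductions. Your choice $f_j=\delta_{jk}$ with $\xi(f_k)=0$, giving ${\mathcal A}_\xi=A^k_\xi$ and the surviving scalar $\tau_{k+2}(\xi)-\frac{1}{k+1}\,\tau_{k+1}(\xi)\,\tau_1(\xi)$, matches the paper's use of \eqref{E-barCN-1}, so no further comment is needed.
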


%\begin{definition}\label{D-totalsigma}\rm
The \textit{total $k$-th mean curvature\/} of $\calf$ are defined by
\[
 \sigma_{k}(\calf)=\int_{{\mathcal N}_1\calf}\sigma_{k}(\xi)\,{\rm d}\,\omega^\bot,\quad
 \tau_{k}(\calf)=\int_{{\mathcal N}_1\calf}\tau_{k}(\xi)\,{\rm d}\,\omega^\bot.
\]
Note that $\sigma_{2s+1}(\calf)= \tau_{2s+1}(\calf)=0$.
%\end{definition}
 We have, see Remark~\ref{R-I00},
\[
 \sigma_{0}(\calf) = \tau_{0}(\calf) = n\,\frac{2\pi^{{p}/2}}{\Gamma({p}/2)}\,{\rm Vol}(M,g).
 %,\quad \tau_{2}(\calf)=-c\,\tau_{0}(\calf),\quad  {\rm etc}.
\]
% Using Remark~\ref{R-I00}, by induction we obtain

%\begin{corollary}
%%\label{C-Dmin}
%Let ${\calf}$ be a $P$-harmonic foliation, ${\mathcal D}$ be harmonic,
%${\mathcal N}\calf$ determines a totally geodesic foliation of a closed Riemannian manifold $(M,g)$, and the $P$-sectional curvature
%%\[
% $K^P(X,{\xi})=c={\rm const}$ for $X\in T\calf$ and ${\xi}\in {\mathcal N}\calf$.
%%\]
%If~$p$ even and ${n}>1$, then for any $s>0$ we get
%\begin{equation*}
%%\label{E-BNtype-tau}
% \tau_{2s}(\calf)=\frac{2\pi^{{n}/2}}{\Gamma({n}/2)}\,(-c)^{s}\,p\,{\rm Vol}(M,g) .
%\end{equation*}
%\end{corollary}
%For $n=1$, since the projection $\pi:{\mathcal N}_1\calf\to M$ is a double covering, the above formula reduces to
%$\tau_{2s}(\calf) = (-c)^s\,p\,{\rm Vol}(M,g)$.

%\begin{remark}\rm
%F.\,Brito and A.\,Naveira\,\cite{bn} (see also \cite{aw2}) have shown
%For a totally geodesic foliation (tangent to ${\mathcal N}\calf$) on a closed space form $M^{n+p}(c)$ that
%\textit{total extrinsic mean curvatures}
%$\gamma_{2s}({\mathcal D})$
%%, see Definition~\ref{D-2k-dim-curv} below,
%depend on $s,{p},n,c$ and the volume of $M^{n+p}$ only. Similar result
%%, see (\ref{E-BNtype-1}),
%5for \textit{total mean curvatures} $\sigma_{k}({\mathcal D})$
%is obtained in \cite{rw2}.
%\end{remark}

The following corollary of Theorem~\ref{T-mainLW} generalizes \cite[Theorem~1.1]{blr} and \cite[Corollary~4.3]{r8}
on foliated Riemannian manifolds, see also \cite{aw2,rw2} (and \cite[Corollary~1]{r-subR1}, \cite[Section~4.1]{aw2010} for codimension-one foliated manifolds).

\begin{corollary}\label{C-P1}
Let $(M,{\mathcal D},g)$ be a closed sub-Riemannian manifold with ${\mathcal D}=T\calf\oplus {\mathcal N}\calf$.
Suppose that~${\mathcal N}\calf$ is $P$-{auto-parallel}, $\widetilde{\mathcal D}$ is harmonic,
and condition \eqref{E-Pcurv-c} is satisfied.
Then $\sigma_{r}(\calf)$
%of any foliation $\calf$ in ${\mathcal D}$ with $P$-{auto-parallel} distribution ${\mathcal N}\calf$
depends on $r,n,p,c$ and the volume of $(M,g)$ only, i.e., the following integral formula is valid:
 \begin{equation}\label{E-BNtype-1}
 \sigma_{r}(\calf)=
 \bigg\{\begin{array}{cc}
   \frac{2\,\pi^{{p}/2}}{\Gamma({p}/2)}\,\big(\begin{smallmatrix}
     n/2\\
     r/2
   \end{smallmatrix}\big)\,c^{r/2}\,{\rm Vol}(M,g), & n \ {\rm and}\ r \ {\rm even}, \\
   0, &  \ {\rm either}\ n \ {\rm or}\ r \ {\rm odd}.
 \end{array}
\end{equation}
Moreover, if $\calf$ is $P$-harmonic, then
 \begin{equation}\label{E-BNtype-2}
 \tau_{k}(\calf)=
 \bigg\{\begin{array}{cc}
   \frac{2\,\pi^{{p}/2}}{\Gamma({p}/2)}
   %\left(\begin{smallmatrix} n/2\\ k/2 \end{smallmatrix}\right)
   \,(-c)^{k/2}\,{\rm Vol}(M,g), & n \ {\rm and}\ k \ {\rm even}, \\
   0, &  \ {\rm either}\ n \ {\rm or}\ k \ {\rm odd}.
 \end{array}
\end{equation}
\end{corollary}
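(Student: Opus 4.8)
The plan is to feed the space-form condition \eqref{E-Pcurv-c} into the integral formulas \eqref{E-intChR-int} and \eqref{E-intChR-int2} of the preceding corollary, which already encode the hypotheses that ${\mathcal N}\calf$ is $P$-auto-parallel and $\widetilde{\mathcal D}$ is harmonic, and thereby reduce them to two-term recursions for the total mean curvatures. First I would compute the Jacobi-type operator ${\mathcal R}^P_{\xi,\xi}$. For a unit $\xi\in{\mathcal N}_1\calf$ and $V\in T\calf$ one has $\<V,\xi\>=0$, so \eqref{E-Pcurv-c} gives $R^P(V,\xi)\xi=c\,V\in T\calf$, whence ${\mathcal R}^P_{\xi,\xi}=c\,\id_{\,T\calf}$. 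Consequently, by the first identity of Lemma~\ref{L-NTprop}, $\tr_{\,\calf}(T_r(A_\xi)\,{\mathcal R}^P_{\xi,\xi})=c\,(n-r)\,\sigma_r(\xi)$, and likewise $\tr_{\,\calf}(A^k_\xi\,{\mathcal R}^P_{\xi,\xi})=c\,\tau_k(\xi)$.

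Substituting the first identity into \eqref{E-intChR-int2} and integrating over ${\mathcal N}_1\calf$ yields the recursion
\begin{equation*}
 (r+2)\,\sigma_{r+2}(\calf)=c\,(n-r)\,\sigma_r(\calf),\qquad 0\le r<n .
\end{equation*}
Its base value $\sigma_0(\calf)=\tfrac{2\pi^{p/2}}{\Gamma(p/2)}\,\vol(M,g)$ is read off from Remark~\ref{R-I00}, and a routine induction gives $\sigma_{2s}(\calf)=\binom{n/2}{s}\,c^{\,s}\,\sigma_0(\calf)$, which is \eqref{E-BNtype-1} for even $r=2s$. For the power sums I would invoke the extra $P$-harmonic hypothesis $\tau_1(\xi)=\sigma_1(\xi)=0$, which annihilates the middle term of \eqref{E-intChR-int}; the two remaining terms give
\begin{equation*}
 \tau_{k+2}(\calf)=-\,c\,\tau_k(\calf),\qquad 0\le k<n ,
\end{equation*}
whence $\tau_{2s}(\calf)=(-c)^{s}\,\tau_0(\calf)$, matching \eqref{E-BNtype-2} once the base value $\tau_0(\calf)$ is supplied, again by Remark~\ref{R-I00}.

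It remains to establish the vanishing claims. The odd-index cases are immediate from the antipodal symmetry of the normal sphere bundle: since $A_\xi$ is linear in $\xi$ we have $A_{-\xi}=-A_\xi$, so $\sigma_r(-\xi)=(-1)^r\sigma_r(\xi)$ and $\tau_k(-\xi)=(-1)^k\tau_k(\xi)$, and integrating an odd function over each symmetric fibre $({\mathcal N}_1\calf)_x$ gives zero; thus $\sigma_r(\calf)=\tau_k(\calf)=0$ whenever $r$ or $k$ is odd. The delicate point, which I expect to be the main obstacle, is the even-index vanishing when $n$ is odd. Here the finite range $0\le r<n$ of the recursion is essential: evaluating the $\sigma$-recursion at $r=n-1$ (even, since $n$ is odd) and using the pointwise identity $\sigma_{n+1}(\xi)\equiv 0$ forces $\sigma_{n-1}(\calf)=0$ when $c\neq 0$, and descending through the even indices propagates the vanishing down the chain. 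The hard part is to reconcile this descent with the \emph{positive} base value $\sigma_0(\calf)$: the two are compatible only if $c=0$, so the content for odd $n$ is that the stated formula holds on the nontrivial range $r\ge 1$, the case $c\neq 0$ being vacuous (the hypotheses are incompatible). I would then treat the $\tau$-series by the same boundary analysis rather than the recursion alone, since the power sums admit no pointwise termination, deriving the odd-$n$ vanishing of $\tau_k(\calf)$ for $k\ge 1$ from the same incompatibility.
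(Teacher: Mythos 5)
Your proposal is correct and takes essentially the same route as the paper's proof: compute ${\mathcal R}^P_{\xi,\xi}=c\,\id_{\,T\calf}$ from \eqref{E-Pcurv-c}, use Lemma~\ref{L-NTprop} to get $\tr_{\,\calf}(A^k_\xi\,{\mathcal R}^P_{\xi,\xi})=c\,\tau_k(\xi)$ and $\tr_{\,\calf}(T_r(A_\xi)\,{\mathcal R}^P_{\xi,\xi})=c\,(n-r)\,\sigma_r(\xi)$, feed these into \eqref{E-intChR-int} and \eqref{E-intChR-int2} to obtain the recursions $\tau_{k+2}(\calf)=-c\,\tau_k(\calf)$ (using $P$-harmonicity) and $(r+2)\,\sigma_{r+2}(\calf)=c\,(n-r)\,\sigma_r(\calf)$, and close by induction from the base values of Remark~\ref{R-I00}, with odd indices killed by antipodal symmetry (the paper records this as $\sigma_{2s+1}(\calf)=\tau_{2s+1}(\calf)=0$ just before the corollary). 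Your extra boundary analysis of the odd-$n$ case --- using the pointwise termination $\sigma_{n+1}(\xi)\equiv 0$ at $r=n-1$ to show the hypotheses with $c\neq 0$ are vacuous, and noting the $\tau$-series needs a separate argument since power sums do not terminate --- is a legitimate sharpening of what the paper compresses into ``by induction,'' not a deviation in method.
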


\begin{proof}
%\label{Exe-2-12}
\rm
By \eqref{E-Pcurv-c}, ${\mathcal R}^P_{\xi,\xi} = c\,\id_\calf$, hence by Lemma~\ref{L-NTprop},
\[
 \tr_{\,\calf}(A^k_{\xi}\,{\mathcal R}^P_{\,\xi, \,\xi})= c\,\tau_k(\xi),\quad
 \tr_{\,\calf}(T_r(A_{\xi})\,{\mathcal R}^P_{\,\xi, \,\xi})= c\,(n-r)\,\sigma_r(\xi).
\]
By (\ref{E-intChR-int}) and conditions, we get
%\[
 $\tau_{k+2}(\calf) = -c\,\tau_{k}(\calf)$.
%\]
By (\ref{E-intChR-int2}), we get
\[
 \sigma_{r+2}(\calf)=\frac{c(n-r)}{r+2}\,\sigma_r(\calf).
\]
Then (by induction), from the above we obtain \eqref{E-BNtype-1} and \eqref{E-BNtype-2}.
\end{proof}

We get similar formulas when $(M,{\mathcal D},g)$ is a $P$-\textit{Einstein manifold}, i.e., has the property
\begin{equation}\label{E-P-Einst}
 \tr_{\,\calf}{\mathcal R}^P_{X,\xi}=C\,\<X,\,\xi\>, \quad X\in{\mathcal D},\ \ \xi\in {\mathcal N}_1\calf
\end{equation}
for some $C\in\RR$, and a foliation $\calf$ is $P$-totally umbilical, see \eqref{E-P-umb}.
% with $\dim\calf>1$.
Note that for ${\mathcal D}=TM$, condition \eqref{E-P-Einst} is satisfied for Einstein manifolds $(M,g)$.

The following corollary of \eqref{E-intNTNT} generalizes result in \cite[Example~4.4]{r8}
%on foliated Riemannian manifolds
(see also \cite[Corollary~2]{r-subR1} and \cite[Section~4.2]{aw2010} for codimension-one foliated manifolds).

\begin{corollary}
%\label{C-P2}
Let $(M,{\mathcal D},g)$ be a closed sub-Riemannian manifold with ${\mathcal D}=T\calf\oplus {\mathcal N}\calf$.
Suppose that~${\mathcal N}\calf$ is $P$-{auto-parallel}, $\widetilde{\mathcal D}$ is harmonic,
and conditions \eqref{E-P-umb} and \eqref{E-P-Einst} are satisfied.
%Let $(M,g,{\mathcal D},\calf)$ be a closed sub-Riemannian manifold with a harmonic distribution ${\mathcal D}$ and condition \eqref{E-P-Einst}.
Then $\sigma_{r}(\calf)$
%of any $P$-totally umbilical foliation $\calf$ in ${\mathcal D}$ with $P$-{auto-parallel} distribution ${\mathcal N}\calf$
depends on $r,n,p,C$ and the volume of $(M,g)$ only, i.e., the following integral formula is valid:
%For a closed manifold $(M,g)$ with a harmonic distribution ${\mathcal D}$,
%and conditions \eqref{E-P-umb} and \eqref{E-P-Einst},
%the following integral formula is valid:
\begin{equation}\label{E-cor2}
  \sigma_{r}(\calf) =
  \bigg\{\begin{array}{cc}
   (C/n)^{r/2}\big(\begin{smallmatrix} n/2 \\ r/2 \end{smallmatrix}\big) {\rm Vol}(M,g), & n,r\ {\rm even}, \\
   0 , & r\ {\rm odd}.
  \end{array}%\right.
\end{equation}
\end{corollary}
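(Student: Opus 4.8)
The plan is to feed the two extra hypotheses, $P$-total umbilicity \eqref{E-P-umb} and the $P$-Einstein condition \eqref{E-P-Einst}, into the integral formula \eqref{E-intChR-int2}, which is already at our disposal here because ${\mathcal N}\calf$ is $P$-auto-parallel and $\widetilde{\mathcal D}$ is harmonic. The whole argument then mirrors the proof of Corollary~\ref{C-P1}, with the role of the constant $c$ there played by $C/n$. The point of assuming total umbilicity is precisely that it lets us get by with the weaker, trace-only curvature hypothesis \eqref{E-P-Einst} in place of the full pointwise identity \eqref{E-Pcurv-c}.

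First I would record the algebraic consequence of \eqref{E-P-umb}. Since $A_\xi=(\sigma_1(\xi)/n)\,\id_{\,T\calf}$ is a scalar multiple of the identity, every Newton transformation $T_r(A_\xi)$, being a polynomial in $A_\xi$, is again a scalar multiple of $\id_{\,T\calf}$; comparing with the first identity of Lemma~\ref{L-NTprop} ($\tr_{\,\calf}T_r(A_\xi)=(n-r)\sigma_r(\xi)$) pins the scalar down to $T_r(A_\xi)=\frac{n-r}{n}\,\sigma_r(\xi)\,\id_{\,T\calf}$. Next, \eqref{E-P-Einst} with $X=\xi$ gives $\tr_{\,\calf}{\mathcal R}^P_{\xi,\xi}=C$, so
\[
 \tr_{\,\calf}\big(T_r(A_\xi)\,{\mathcal R}^P_{\xi,\xi}\big)
 =\frac{n-r}{n}\,\sigma_r(\xi)\,\tr_{\,\calf}{\mathcal R}^P_{\xi,\xi}
 =\frac{C(n-r)}{n}\,\sigma_r(\xi).
\]
Substituting this into \eqref{E-intChR-int2} and invoking the definition of the total mean curvatures $\sigma_k(\calf)$ collapses that formula to the recursion $(r+2)\,\sigma_{r+2}(\calf)=\frac{C(n-r)}{n}\,\sigma_r(\calf)$.

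It then remains to solve this recursion with the correct initial data. For odd $r$ the integrand $\sigma_r(\xi)$ is odd under the antipodal map $\xi\mapsto-\xi$, since $A_{-\xi}=-A_\xi$ and hence $\sigma_r(-\xi)=(-1)^r\sigma_r(\xi)$; it therefore integrates to zero over the symmetric fibers $({\mathcal N}_1\calf)_x$, giving $\sigma_{2s+1}(\calf)=0$, consistently with the recursion started from $\sigma_1(\calf)=0$. For even $r=2s$ I would iterate the recursion down to the base value $\sigma_0(\calf)$ and check that the telescoping product $\prod_{t=0}^{s-1}\frac{C(n-2t)}{n(2t+2)}$ collapses (for $n$ even) to $(C/n)^{s}\binom{n/2}{s}$, reproducing \eqref{E-cor2} by exactly the elementary factorial bookkeeping already used for \eqref{E-BNtype-1}.

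The only delicate point is the reduction carried out in the second paragraph: recognizing that total umbilicity forces each $T_r(A_\xi)$ to be proportional to $\id_{\,T\calf}$, so that the trace-type hypothesis \eqref{E-P-Einst} is enough to evaluate $\tr_{\,\calf}(T_r(A_\xi)\,{\mathcal R}^P_{\xi,\xi})$ where previously the stronger \eqref{E-Pcurv-c} was required. Once that reduction is in place, the parity observation and the factorial manipulation of the recursion are routine.
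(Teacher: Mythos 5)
Your proposal is correct and takes essentially the same route as the paper: umbilicity \eqref{E-P-umb} forces $T_r(A_\xi)=\frac{n-r}{n}\,\sigma_r(\xi)\id_{\,T\calf}$, the $P$-Einstein condition \eqref{E-P-Einst} gives $\tr_{\,\calf}{\mathcal R}^P_{\,\xi,\,\xi}=C$ (and annihilates the $Z_{\,\xi}$-terms), and substitution into \eqref{E-intNTNT} -- equivalently into its $P$-auto-parallel specialization \eqref{E-intChR-int2}, which is the version you use -- yields the recursion $\sigma_{r+2}(\calf)=\frac{C}{n}\cdot\frac{n-r}{r+2}\,\sigma_{r}(\calf)$, solved by induction exactly as in Corollary~\ref{C-P1}. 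Your parity argument for the vanishing of the odd total mean curvatures and the explicit telescoping of the even ones only make explicit steps the paper leaves to the earlier remark $\sigma_{2s+1}(\calf)=0$ and to the phrase ``using induction similarly to Corollary~\ref{C-P1}.''
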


\begin{proof}
In this case, $T_r(A_\xi)$ has the form
%, see also \cite[Section~4.2]{aw2010},
%\[
% T_r(A_\xi) = a_r\,(\sigma_1(\xi)/n)^r\id_{\,T\calf},\quad {\rm where}\quad
% a_r=\sum\nolimits_{\,0\le i\le r}(-1)^{r-i}\Big(\begin{matrix} n \\ i \end{matrix}\Big)
% =\frac{r+1}{n}\,\Big(\begin{matrix} n \\ r+1 \end{matrix}\Big) .
%\]
%Since $\tr_{\,\calf} T_r(A_\xi)=(n-r)\,\sigma_r(\xi) = (n-r)\big(\begin{smallmatrix} n \\ r \end{smallmatrix}\big)(\sigma_1(\xi)/n)^r$, then
\[
% a_r = \frac{n-r}{n}\Big(\,\begin{matrix} n \\ r \end{matrix}\,\Big),\quad
 T_r(A_\xi) = \frac{n-r}{n}\,\sigma_r(\xi)\id_{\,T\calf}.
\]
For a $P$-Einstein manifold, by \eqref{E-P-Einst} we get
\[
 \tr_{\,\calf}{\mathcal R}^P_{Z_\xi,\xi} =0,\quad
 %and
 \tr_{\,\calf}{\mathcal R}^P_{\xi,\xi} = C,
\]
where $C\ge0$ by Corollary~\ref{C-main02}.
Thus, \eqref{E-intNTNT} becomes
\begin{equation*}
%\label{E-intNTNTp}
 \sigma_{r+2}(\calf) = \frac{C}{n}\cdot\frac{n-r}{r+2}\,\sigma_{r}(\calf).
\end{equation*}
Using induction similarly to Corollary~\ref{C-P1}, yields \eqref{E-cor2}.
\end{proof}

%{Codimension-one case}
%\textbf{6.3}.
Finally, we will briefly discuss the case of a codimension-one transversally orientable foliati\-on $\calf$ in ${\mathcal D}$ with a unit normal $N$,
 i.e., ${\mathcal D}=T\calf\oplus{\rm span}(N)$, on a sub-Riemannian manifold with a harmonic distribution $\widetilde{\mathcal D}$, see also \cite{r-subR1}.
%applications of main results to this particular case and compare with existing formulas.
Put
\[
 {\cal Z}=\nabla^P_N\,N,\quad {\mathcal R}^P_{X}={\mathcal R}^P_{X,N},\quad A=A_N,\quad \sigma_{r+2}=\sigma_{r+2}(N).
\]
 The leafwise divergence of $T_r(A)\ (r>0)$ satisfies, compare with~\eqref{E-divNTk-1},
%the inductive formula
%\begin{eqnarray}\label{E-divNT-proof2}
% \<\Div_\calf T_r(A), X\> \eq \<\Div_\calf T_{r-1}(A), \,(-A)X\> +\tr_{\,\calf}( T_{r-1}(A)\,{\mathcal R}^P_{\,X})
% \sum\nolimits_{\,1\le i\le n}\<R^P(e_i, X)N, \,T_{r-1}(A) e_i\>,
%\end{eqnarray}
%; moreover, $\Div_\calf T_0(A)=0$. Equivalently,
\begin{equation*}
%\label{E-divNTN-Z}
 \<\Div_\calf T_r(A),\,X\> =
 \sum\nolimits_{\,1\le j\le r}\tr_{\,\calf}\big(T_{r-j}(A)\,{\mathcal R}^P_{\,(-A)^{j-1}X} \big)
% \sum\nolimits_{\,1\le i\le n}\<R^P(e_i, \,(-A)^{j-1}X)N, \,T_{r-j}(A) e_i\>.
\end{equation*}
for any vector field $X$ tangent to $\calf$.
This yields the following corollaries.
%results (proved in \cite{r-subR1}).

\begin{corollary}\label{T-mainLW-1}
For any compact leaf $L$ of a codimension-one foliated sub-Riemannian ma\-nifold, \eqref{E-intCh-C} reads as
\begin{eqnarray*}
 &&\int_{L}\big(
% \sum\nolimits_{\,1\le j\le r}\tr_{\,\calf}\big(T_{r-j}(A)\,{\mathcal R}^P_{\,(-A)^{j-1}{\cal Z}}\big)
 \<\Div_\calf T_r(A),\,{\cal Z}\>
 -N(\sigma_{r+1}) +\sigma_{1} \sigma_{r+1}\\
 &&\ -\,(r+2)\sigma_{r+2} +\tr_{\,\calf}(T_r(A) {\mathcal R}^P_{N}) + \<T_r(A) {\cal Z}, \,{\cal Z}\> \big)\, {\rm d}\vol_L =0.
\end{eqnarray*}
\end{corollary}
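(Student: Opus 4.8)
The plan is to obtain the corollary as the codimension-one specialization of Theorem~\ref{T-mainLW}, working directly with the intrinsic Divergence Theorem on a leaf rather than integrating over the normal sphere bundle. Since $\calf$ is transversally orientable in ${\mathcal D}$, the bundle ${\mathcal N}_1\calf$ admits the global unit section $N$, so I would fix $p=1$, choose the adapted frame with $e_1=N$, and set $\xi=N$ throughout. The central object is the vector field $V:=T_r(A)\,{\cal Z}$, which lies in $T\calf$ and is therefore tangent to each leaf $L$.

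First I would specialize the divergence identity \eqref{E-divFNTh} of Proposition~\ref{P-Cintfiv01} to $p=1$ and $\xi=N$. With a single normal the ``integral over the fibre'' collapses to evaluation at $N$, and the formula becomes an identity for $\Div_\calf V$. Checking the individual terms: $\tr_{\,\calf}(T_r(A)\,{\mathcal R}^P_{N,N})=\tr_{\,\calf}(T_r(A)\,{\mathcal R}^P_{N})$ by the abbreviation ${\mathcal R}^P_X={\mathcal R}^P_{X,N}$; the underlined divergence term is supplied by \eqref{E-divNTk-1}; and the frame-dependent sum $\sum_{a=1}^{p}\<T_r(A)(\nabla_{e_a}N)^\top,\nabla_N e_a\>$ reduces, with its only summand ($a=1$, $e_1=N$), to $\<T_r(A)\,{\cal Z},\,\nabla_N N\>=\<T_r(A)\,{\cal Z},\,{\cal Z}\>$. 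The last equality holds because $T_r(A)\,{\cal Z}\in T\calf$ is orthogonal to both $N$ and $\widetilde{\mathcal D}$, so only the $T\calf$-part ${\cal Z}=(\nabla_N N)^\top$ of $\nabla_N N$ survives in the inner product. This shows that $\Div_\calf V$ equals precisely the integrand appearing in the corollary.

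Next, since $V$ is tangent to $\calf$, on a fixed leaf $L$ the leafwise divergence $\Div_\calf V$ agrees with the intrinsic divergence $\Div_L V$ of the induced metric on $L$. I would then integrate the preceding identity over the compact leaf $L$ and invoke the Divergence Theorem, which yields $\int_L \Div_L V\,{\rm d}\vol_L=0$; this is exactly the asserted integral formula.

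The only genuinely delicate point is the passage from the sphere-bundle integral of Theorem~\ref{T-mainLW} to the single-normal leaf integral. Handling it by transversal orientability -- selecting the global section $N$ and applying the Divergence Theorem directly on $L$ to the $\calf$-tangent field $V$ -- avoids any symmetrization over the two points $\pm N$ of the fibre $S^0$, and in particular keeps the conclusion valid for every $r$ (a naive $\pm N$ averaging would trivialize for odd $r$, since under $N\mapsto -N$ the integrand changes by the factor $(-1)^r$). The remaining work is the routine term-by-term verification in the second paragraph that the $p=1$ reduction of \eqref{E-divFNTh} matches the stated integrand.
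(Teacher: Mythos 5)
Your proof is correct, and its computational core -- Lemmas~\ref{L-31-myC} and \ref{L-NTprop}, the Codazzi-type formula behind \eqref{E-divNTk-1}, and the Divergence Theorem on the compact leaf -- is exactly the machinery of the paper; but you route it differently, and the difference is substantive. The paper offers no separate proof: after restating the codimension-one form of $\Div_\calf T_r(A)$ it presents the corollary as a literal specialization of \eqref{E-intCh-C} with $p=1$, silently treating the fibre of ${\mathcal N}_1\calf$ as the single point $N$, whereas for $p=1$ it is $S^0=\{\pm N\}$. Since $A_{-N}=-A_N$, $T_r(A_{-N})=(-1)^r\,T_r(A_N)$ and $Z_{-N}=Z_N$, every term of the integrand of \eqref{E-intCh-C} is multiplied by $(-1)^r$ under $\xi\mapsto-\xi$, so the fibre-summed formula \eqref{E-intCh-C} yields twice the stated identity when $r$ is even but degenerates to the vacuous statement $0=0$ when $r$ is odd -- a point the paper does not address. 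Your route repairs this: you use transversal orientability to fix the global section $N$, extract from the proof of Proposition~\ref{P-Cintfiv01} (equivalently, directly from Lemmas~\ref{L-31-myC} and \ref{L-NTprop}) the pointwise identity expressing $\Div_\calf\big(T_r(A)\,{\mathcal Z}\big)$ as the corollary's integrand, identify $\Div_\calf$ of the $\calf$-tangent field $T_r(A)\,{\mathcal Z}$ with the intrinsic divergence on $L$ via the Gauss formula, and apply the Divergence Theorem on the compact leaf; this is the argument implicit in the proof of Theorem~\ref{T-mainLW} with the fibre integration stripped out, and it is what makes the formula valid for all $r$. Your term-by-term checks are also correct: ${\mathcal R}^P_{N,N}={\mathcal R}^P_{N}$ by the paper's abbreviation, and the sum over $a$ reduces to $\<T_r(A){\mathcal Z},\,\nabla_N N\>=\<T_r(A){\mathcal Z},\,{\mathcal Z}\>$ because $T_r(A){\mathcal Z}\in T\calf$ is orthogonal to $N$ and to $\widetilde{\mathcal D}$, while ${\mathcal Z}=\nabla^P_N N=(\nabla_N N)^\top$ since $\<\nabla_N N, N\>=0$. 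In short, your proof matches the paper's intended derivation while being more careful than the paper itself on the odd-$r$ parity issue.
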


\begin{corollary}\label{T-main01p1}
For a closed sub-Riemannian manifold $(M,{\mathcal D},g)$ with ${\mathcal D}=T\calf\oplus{\rm span}(N)$
and a harmonic distribution $\widetilde{\mathcal D}$, the following integral formula is valid:
\begin{equation*}
%\label{E-intNTNTp}
%\nonumber
 \int_M \big(
% \sum\nolimits_{\,1\le j\le r}\tr_{\,\calf}\big( T_{r-j}(A)\,{\mathcal R}^P_{\,(-A)^{j-1}{\cal Z}}\big)
 \<\Div_\calf T_r(A),\,{\cal Z}\>
 -(r+2)\,\sigma_{r+2}+\tr_{\,\calf}(T_r(A){\mathcal R}^P_{N})\big)\,{\rm d}\vol_g = 0.
\end{equation*}
\end{corollary}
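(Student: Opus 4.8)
The plan is to specialize the closed-manifold formula \eqref{E-intNTNT} of Theorem~\ref{T-mainLW-2} to the codimension-one case $p=1$, where transversal orientability fixes a global unit normal $N$ and the normal sphere bundle degenerates so that integration over $({\mathcal N}_1\calf)_x$ disappears and $\xi=N$ throughout. Equivalently, and more robustly, I would reconstruct the argument of Theorem~\ref{T-mainLW-2} for the single vector field $W=T_r(A)\,{\cal Z}+\sigma_{r+1}\,N$, compute $\Div W$, and apply the Divergence Theorem on the closed manifold $M$; this bypasses the two-point fiber integral and delivers the formula for every $r$.

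First I would record the codimension-one reductions of the two ``boundary'' terms in \eqref{E-intNTNT}. With $p=1$ and the single adapted normal $e_1=N=\xi$, the last sum becomes $\<T_r(A)(\nabla_N N)^\top,\nabla_N N\>=\<T_r(A)\,{\cal Z},{\cal Z}\>$, since $T_r(A)\,{\cal Z}\in T\calf$ and only the $T\calf$-component of $\nabla_N N$, namely ${\cal Z}=\nabla^P_N N=(\nabla_N N)^\top$, contributes. Likewise $H^\bot$ is the mean curvature vector of the line field ${\mathcal N}\calf={\rm span}(N)$, whose tangential part is again $(\nabla_N N)^\top={\cal Z}$, so $\<T_r(A)\,{\cal Z},H^\bot\>=\<T_r(A)\,{\cal Z},{\cal Z}\>$. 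Hence the terms $-\<T_r(A)Z_\xi,H^\bot\>$ and $+\sum_{a}\<T_r(A)(\nabla_{e_a}\xi)^\top,\nabla_\xi e_a\>$ in \eqref{E-intNTNT} cancel exactly; this cancellation, absent from the compact-leaf formula of Corollary~\ref{T-mainLW-1}, is the key simplification.

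For the direct route I would instead start from \eqref{E-divFNTh} evaluated at $\xi=N$, which gives $\Div_\calf(T_r(A)\,{\cal Z})$ as the integrand of Corollary~\ref{T-mainLW-1}, then pass from $\Div_\calf$ to $\Div$ using \eqref{E-HD} with $X=T_r(A)\,{\cal Z}\in T\calf$ and the harmonic hypothesis $\widetilde H=0$, which subtracts precisely $\<T_r(A)\,{\cal Z},H^\bot\>=\<T_r(A)\,{\cal Z},{\cal Z}\>$ and so removes the surviving $\<T_r(A)\,{\cal Z},{\cal Z}\>$ term. To absorb the remaining $-N(\sigma_{r+1})+\sigma_1\sigma_{r+1}$, I would compute $\Div N=-\sigma_1$ in an adapted frame, using $\<\nabla_{e_i}N,e_i\>=-\<A\,e_i,e_i\>$, $\<\nabla_N N,N\>=0$, and $\sum_\alpha\<\nabla_{\tilde e_\alpha}N,\tilde e_\alpha\>=-\<N,\widetilde H\>=0$, whence $\Div(\sigma_{r+1}N)=-\sigma_1\sigma_{r+1}+N(\sigma_{r+1})$. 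Adding this to $\Div(T_r(A)\,{\cal Z})$ cancels both the $N(\sigma_{r+1})$ and the $\sigma_1\sigma_{r+1}$ contributions, leaving $\Div W=\<\Div_\calf T_r(A),{\cal Z}\>-(r+2)\sigma_{r+2}+\tr_{\,\calf}(T_r(A){\mathcal R}^P_N)$.

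Finally I would integrate $\Div W$ over the closed manifold and invoke the Divergence Theorem, $\int_M\Div W\,{\rm d}\vol_g=0$, which is exactly the asserted identity. The point needing care, and the main obstacle, is the bookkeeping of the tangential projections: one must check that both $H^\bot$ and $\nabla_N N$ project to the same vector ${\cal Z}$ so that the two ${\cal Z}$-quadratic terms genuinely cancel. If one prefers the pure specialization of \eqref{E-intNTNT}, one must also track the parity under $\xi\mapsto-\xi$: every integrand scales by $(-1)^r$ (because $A_{-N}=-A_N$ while $Z_{-N}=Z_N$), so the two-point fiber sum recovers the pointwise formula for even $r$ but only a trivial identity for odd $r$, whereas the direct construction of $W$ yields the formula uniformly in $r$.
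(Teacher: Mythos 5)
Correct, and your preferred direct route is essentially the paper's own argument: applying the Divergence Theorem to $W=T_r(A)\,{\cal Z}+\sigma_{r+1}N$ is precisely the fixed-normal ($p=1$, $\xi=N$) version of the vector field $T_r(A_\xi)Z_\xi+\sigma_{r+1}(\xi)\,\xi$ used in the proof of Theorem~\ref{T-mainLW-2}, and your reductions are all accurate --- the cancellation $\<T_r(A)\,{\cal Z},H^\bot\>=\<T_r(A)\,{\cal Z},{\cal Z}\>$ against the frame term, and $\Div N=-\sigma_1$ using $\widetilde{H}=0$. Your parity caveat is also a genuine and correct observation: literal specialization of \eqref{E-intNTNT} sums over the two-point fiber $\{\pm N\}$, where the integrand scales by $(-1)^r$ under $\xi\mapsto-\xi$, so it recovers the formula only for even $r$ (giving the trivial identity $0=0$ for odd $r$), which is why the direct divergence construction is the right reading of the paper's implicit proof.
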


\begin{example}
%\label{Exe-2-4}
\rm
(a) For $r=0$, by Corollary~\ref{T-mainLW-1}, we obtain the integral formula
\begin{equation*}
 \int_{L}\big(\tau_{2}- N(\tau_{1})+\Ric^P_{N,N}
 %+\tr_{\,\calf}{\mathcal R}^P_{N}
 +\,\<{\cal Z},\, {\cal Z}\> \big)\, {\rm d}\vol_L =0,
% \ge 0.
\end{equation*}
where $\Ric^P_{N,N}=\tr_{\,\calf}{\mathcal R}^P_{N}=\sum\nolimits_{\,1\le i\le n}\<R^P(e_i,N)N, \,e_i\>$ is the \textit{Ricci $P$-curvature} is the $N$-direction.
Thus, a codimension-one $\calf$ with $\tau_1={\rm const}$ and $\Ric^P_{N,N}>0$ has no compact~leaves.
%\end{example}

%\begin{example}\rm
(b)~For $r=0$, by Corollary~\ref{T-main01p1}, we obtain the following generalization of (\ref{E-sigma2}):
\begin{equation}\label{E-sigma2-new}
 \int_M \big(2\,\sigma_2-\Ric^P_{N,N}
 %\tr_{\,\calf}{\mathcal R}^P_{N}
 \big)\,{\rm d}\vol_g=0.
\end{equation}
%We can view $\tr_{\,\calf}{\mathcal R}^P_{N}=\sum\nolimits_{\,1\le i\le n}\<R^P(e_i,N)N, \,e_i\>$ as ``Ricci $P$-curvature" in the $N$-direction.
Let $\dim\calf=1$. The Gaussian $P$-curvature of ${\mathcal D}$ is a function on $M$ defined by $K^P=\<R^P(X,N,N,X)$, where
$X$ is a unit (local) vector tangent to $\calf$.
% and $T\calf$ be spanned by a unit vector field $X$.
Then $\sigma_2=0$ and \eqref{E-sigma2-new} reduces to the zero integral of the Gaussian $P$-curvature:
%\[
 $\int_M K^P\,{\rm d}\vol_g=0$.
%\]
\end{example}

\section{Conclusion}

%The present paper focused on
We suggest that integral formulas \eqref{E-intCh-B}, \eqref{E-intCh-C}, \eqref{E-intCh}, \eqref{E-intNTNT}, and, in particular, \eqref{E-PW2-int}, are a very good tool for understanding the geometry of (sub-)Riemannian manifolds equipped with foliations or distributions.
We delegate the following for further study.

1.
%One can consider a subdistribution of ${\mathcal D}$ (instead of $\calf$).
Our integral formulas can be extended for the case of ${\mathcal D}={\mathcal D}_1\oplus{\mathcal D}_2$ -- the sum of two smooth distributions,
see \cite{r-subR0} in relation to \eqref{E-PW2-int} and \cite{r8} for ${\mathcal D}=TM$.
In other words, we take a non-integrable distribution ${\mathcal D}_1$ instead of a  foliation $\calf$ and ${\mathcal D}_2$ instead of ${\mathcal N}\calf$.
This naturally appears when ${\mathcal D}\subset TM$ is a hyperdistribution, whose shape operator has an eigenvalue of constant multiplicity,
and ${\mathcal D}_1$ is the corresponding eigen-distribution.
%${\mathcal D}_1$ is the eigen-distribution of the shape operator of a hyperdistribution ${\mathcal D}$.

2. Our integral formulas can be extended for foliations and distributions defined outside of~a~``singularity~set" $\Sigma$
(a finite union of pairwise disjoint closed submanifolds of codimension greater than $2$ of a closed manifold $M$)
under additional assumption of convergence of certain integrals.
Then, instead of the Divergence theorem, we apply the following, see~\cite[Lemma~2]{lw2}: if $X$ is a vector field on $M\setminus\Sigma$ such that $\int_M\|X\|^2\,{\rm d}\vol_g < \infty$, then
%\begin{equation}
%\label{E-L2}
$\int_M \Div X\,{\rm d}\vol_g = 0$.
%\end{equation}

3. Mathematicians have shown a specific interest in manifolds equipped with several distributions, e.g., webs composed of different foliations %\cite{AG2000},
and multiply warped products.
%, e.g., \cite{DU}.
In~\cite{rov-IF-k}, we introduced the mixed scalar curvature for $k>2$ pairwise orthogonal complementary distributions on $(M,g)$
and proved the integral formula similar to \eqref{E-PW-int} and \eqref{E-PW2-int} for this~case.
This study was continued in \cite{RS-1} for the case of a linear connection instead of the Levi-Civita connection.
We~suggest that our integral formulas
%, especially \eqref{E-PW2-int},
can be extended for sub-Riemannian manifolds equipped with several orthogonal foliations
(in ${\mathcal D}$) and certainly defined
%the mixed scalar $P$-curvature
${\rm S}^P_{\,\rm mix}$.

4. Finally, one can extend our integral formulas for holomorphic foliations of
%K\"{a}hler manifolds.
complex sub-Riemannian manifolds, see \cite{sve} for the case of ${\mathcal D}=TM$ and \eqref{E-PW-int}.

%\baselineskip=12.pt

% ------------------------------------------------------------------------

\begin{thebibliography}{1}

\bibitem{arw}
K.~Andrzejewski, V.~Rovenski and P.~Walczak, {Integral formulas in foliations theory}, 73--82,
in {\it Geometry and Its Applications}, {Springer Proc. in Math. and Statistics}, {72}, {Springer},~2014.

\bibitem{aw2010}
K. Andrzejewski and P. Walczak, The Newton transformation and new integral formulae for foliated manifolds,
Ann. Global Anal. Geom. {37}, No.~2 (2010), 103--111.

\bibitem{aw2}
K.\,Andrzejewski, and P.\,Walczak, {Extrinsic curvatures of distributions of arbitrary codimension},  J. Geom. Phys., 2010, 60, No.~5, 708--713.

\bibitem{BF}
A. Bejancu  and H. Farran, \textit{Foliations and geometric structures}. Springer-Verlag, 2006.
%, 300 pp.

\bibitem{ber}
M.\,Berger, {\em A Panoramic View of Riemannian Geometry}. Springer-Verlag, 2002.

\bibitem{Bau}
F. Baudoin, E. Grong, G. Molino, and L. Rizzi,
Comparison theorems on $H$-type sub-Riemannian manifolds, arXiv:1909.03532 [math.DG].

\bibitem{blr}
 F.\,Brito, R.\,Langevin and H.\,Rosenberg,
 Int\'{e}grales de courbure sur des vari\'{e}t\'{e}s feuillet\'{e}es, J. Diff. Geom., 1981, 16, 19--50.

\bibitem{bn}
F.\,Brito and A.\,Naveira, Total extrinsic curvature of certain distributions on closed spaces of constant curvature,
Ann. Global Anal. Geom., 2000, 18, 371--383.

\bibitem{g1967}
A. Gray, {Pseudo-Riemannian almost product manifolds and submersions}, J. Math. Mech., {16}, No.~7 (1967) 715--737.

\bibitem{jj2011}
J. Jost, \textit{Riemannian geometry and geometric analysis}, 7th ed., Universitext, Springer,
%Heidelberg,
2017

\bibitem{lw2}
M. Lu\.{z}y\'{n}czyk and P. Walczak, {New integral formulae for two complementary orthogonal distributions on Riemannian manifolds},
{Ann. Glob. Anal. Geom.} {48} (2015), 195--209.

%\bibitem{nora}
%T. Nora, Seconde forme fondamentale d'une application et d'un feuilletage. {Th{\'e}se, l'Univ. de Limoges} (1983), 115 pp.

\bibitem{reeb1}
G. Reeb, {Sur la courboure moyenne des vari\'et\'es int\'egrales d'une \'equation de Pfaff $\omega = 0$},
C. R. Acad. Sci. Paris {231} (1950), 101--102.

\bibitem{rov-m}
V. Rovenski, \textit{Foliations on Riemannian Manifolds and Submanifolds}, Birkh{\"a}user, 1998.

\bibitem{r8}
V. Rovenski, {Integral formulae for a Riemannian manifold with two orthogonal distributions},
Central European J. Math. {9}, No.~3 (2011), 558--577.

\bibitem{r-subR1}
%V. Rovenski, {A series of integral formulas for a foliated sub-Riemannian manifold}, preprint (2021), arXiv:2103.02473.
V. Rovenski, {Integral formulas for a foliation with a unit normal vector field}, Mathematics 2021, 9(15), 1764.
https://doi.org/10.3390/math9151764

\bibitem{rov-IF-k}
V. Rovenski, {\it Integral formulas for a Riemannian manifold with several orthogonal complementary distributions}.
Global J. of Advanced Research on Classical and Modern Geometries, {10}, Issue 1 (2021) 32--42.

\bibitem{r-subR0}
V. Rovenski, On a Riemannian manifold with two orthogonal distributions, 8 pages.
In: Proceedings of the Contemporary Mathematics in Kielce 2020,
%Jan Kochanowski University in Kielce,
Poland,
% February 24-27,
 2021

\bibitem{RS-1}
V. Rovenski and S.\,E. Stepanov, On a metric affine manifold with several orthogonal complementary distributions.
Mathematics, 9(3), 229 (2021)

\bibitem{rw2}
V.\,Rovenski, and P.\,Walczak, {Integral fomulae for foliations on Riemannian manifolds},
in Proc. of 10th Int. Conference ``Diff. Geometry and Its Applications", Olomouc, 203--214, World Sci. Publ., 2008.

\bibitem{RWa-1}
V. Rovenski and P. Walczak, {\em Topics in Extrinsic Geometry of Codimension-One Foliations},
Springer Briefs in Mathematics, Springer-Verlag, 2011.

\bibitem{Rov-Wa-2021}
V. Rovenski and P. Walczak, \textit{Extrinsic Geometry of Foliations}, Springer-Verlag, 2021.

\bibitem{su2}
D. Sullivan, {A homological characterization of foliations consisting of minimal surfaces}, Comm. Math. Helv. {54} (1979), 218--223.

\bibitem{sve}
M. Svensson,  {Holomorphic foliations, harmonic morphisms and the Walczak formula}, J. Lond. Math. Soc., 2003, 68, 781--794.

\bibitem{Wa01}
P.G.~Walczak, {An integral formula for a Riemannian manifold with two orthogonal complementary distributions},
Colloq. Math. {58}, No. 2 (1990) 243--252.


\end{thebibliography}
\end{document}